\newenvironment{keywords}{ \noindent\footnotesize\textbf{Keywords and phrases:}}{}
\newenvironment{class}{\noindent\footnotesize\textbf{Mathematics subject classification 2010:}}{}
 \theoremstyle{plain}
\newtheorem{thm}{Theorem}[section]
  \theoremstyle{definition}
  \newtheorem{defn}[thm]{Definition}
  \theoremstyle{remark}
  \newtheorem{rem}[thm]{Remark}
 \theoremstyle{definition}
  \newtheorem{example}[thm]{Example}
  \theoremstyle{plain}
  \newtheorem{prop}[thm]{Proposition}
  \theoremstyle{plain}
  \theoremstyle{plain}
  \newtheorem*{lem*}{Lemma}
  \theoremstyle{plain}
  \newtheorem*{thm*}{Theorem}
  \theoremstyle{plain}
  \newtheorem{cor}[thm]{Corollary}
  \theoremstyle{remark}
\newcommand{\Abs}[1]{\left\lVert#1\right\rVert}
\newcommand{\R}{\mathbb{R}}
\newcommand{\K}{\mathbb{K}}
\newcommand{\N}{\mathbb{N}}
\newcommand*{\abs}[1]{\left\lvert#1\right\rvert}
\newcommand*{\lip}[1]{\lvert#1\rvert_{\textnormal{Lip}}}
\newcommand*{\supp}{\operatorname{supp}}
\newcommand*{\dd}{\mathrm{d}}
\newcommand*{\1}{\chi}
\newcommand{\s}[1]{\mathcal{#1}}
\DeclareMathAccent{\Circ}{\mathalpha}{operators}{"17}
\newcommand{\eps}{\varepsilon}
\renewcommand{\tilde}{\widetilde}
\renewcommand*{\epsilon}{\varepsilon}
\renewcommand*{\rho}{\varrho}
\begin{document}
\selectlanguage{english}%
\institut{Institut f\"ur Analysis}

\preprintnumber{MATH-AN-11-2012}

\preprinttitle{A Functional Analytic Perspective to Delay Differential Equations}

\author{Rainer Picard, Sascha Trostorff, Marcus Waurick}

\makepreprinttitlepage\setcounter{section}{-1}

\date{}

\selectlanguage{american}%

\title{A Functional Analytic Perspective to Delay Differential Equations}

\author{Rainer Picard,\\
 Sascha Trostorff \& Marcus Waurick\foreignlanguage{english}{\thanks{Institut für Analysis,Fachrichtung Mathematik, Technische Universität Dresden, Germany, rainer.picard@tu-dresden.de, sascha.trostorff@tu-dresden.de,  marcus.waurick@tu-dresden.de }}}
\maketitle
\begin{abstract}
\textbf{Abstract.} We generalize the solution theory for a class of
delay type differential equations developed in a previous paper, dealing
with the Hilbert space case, to a Banach space setting. The key idea
is to consider differentiation as an operator with the whole real
line as the underlying domain as a means to incorporate pre-history
data. We focus our attention on the issue of causality of the differential
equations as a characterizing feature of evolutionary problems and
discuss various examples. The arguments mainly rely on a variant of
the contraction mapping theorem and a few well-known facts from functional
analysis.
\end{abstract}
\begin{keywords} ordinary differential equations, causality, memory,
delay \end{keywords}

\begin{class} 34K05 (Functional differential equations, general theory);
34K30 (Equations in abstract spaces); 34K40 (Neutral equations); 34A12
(Initial value problems, existence, uniqueness, continuous dependence
and continuation of solutions); 45G15 (Systems of nonlinear integral
equations) \end{class}

\newpage

\tableofcontents{}

\newpage

\section{Introduction}

In this note, we present a unified Banach space perspective to a class
of ordinary differential equations with memory and delay effects.
This class is often summarized under the umbrella term delay differential
equations: 
\[
\dot{x}(t)=f(t,x_{t},\dot{x}_{t})\ t\in I,\ I\subseteqq\R\ \text{interval},
\]
where $\dot{x}$ denotes the (time-)derivative of $x$ and $x_{t}$
denotes the \emph{history segment of $x$}, cf. e.g. \cite{DieGilVerWal,HalLun1993}
see also Example \ref{Continuous-Delay} below. These equations have
many applications in engineering or sciences. We refer to \cite{Bal2009,DiekGyl2008}
and the references therein for an account of various applications. 

The class considered here covers ordinary differential equations,
differential difference equations, integro-differential equations
or even neutral differential equations. Using some basic functional
analysis, the main contribution of this note is that the aforementioned
equations can be treated in a unified manner. The core idea consists
in the treatment of the problem on the whole real axis as time-line.
This enables us to conveniently detour the introduction of certain
(pre-)history spaces, cf. e.g. \cite{DieGilVerWal,HinoMurakamiNaito}.
Moreover, we do not need to introduce an extended state space as it
can be done for linear theory using semi-group methods, see e.g. \cite{BatPia}.

Our perspective also shows that finite and infinite delay do not need
different treatment. However, our focus is on existence of solutions
and continuous dependence on the data rather than the continuity or
differentiability or other qualitative properties of the solution
itself. This yields an \emph{elementary} solution theory at least
for $L_{p}$-spaces. Though parts of our results are covered by well-known
theory, we have more freedom in choosing right-hand sides. In particular,
we can solve delay differential equations with certain measures or
functions with unbounded variation as right-hand sides, cf. Subsection
\ref{sub:Examples}. 

Our development of a solution theory for delay differential equations
starts in Section \ref{sec:gst}, where we state a variant of the
contraction mapping theorem tailored for operator equations in the
abstract form 
\[
Cx=F(x),
\]
for $x$ residing in some Banach space $X$ and $F$ being Lipschitz-continuous
in suitable sense and $C:D(C)\subseteqq X\to X$ being a closed, densely
defined, continuously invertible linear operator. The remaining parts
of this paper are essentially applications of the results from Section
\ref{sec:gst}. To this end, we have to establish the (time-)derivative
as a \emph{continuously invertible }operator\emph{. }Thus, both the
Sections\emph{ }\ref{sec:dde-lp} and \ref{sec:The-non-reflexive-case}
start with the definition of the time-derivative as a continuously
invertible operator in a $L_{p}$-space and in a space of continuous
functions. Having our main applications of delay differential equations
in mind, we introduce the (time-)derivative on functions defined on
the whole real line as time axis. If one wants to recover classical
theory, e.g. initial value problems for ordinary differential equations,
one has to know that the solution of a differential equation depends
only on the past of the right-hand side. The latter is summarized
by the notion of causality, see \cite{Laksh10} or  Definition \ref{As-causality-is}
below. Thus, in both the Section \ref{sec:dde-lp} and \ref{sec:The-non-reflexive-case},
we also show causality of the respective solution operators. These
sections are complemented by the discussion of several examples. 

It should be noted that many ideas rely on results and strategies
used in reference \cite{Kalauch}, where a Hilbert space perspective
is preferred. The idea of introducing the time-derivative as a continuously
invertible operator stems from references \cite{PicTrans89,Pi2009-1,PicMcG2011},
which in view of the $L_{\infty}$-considerations in Section\foreignlanguage{english}{
\ref{sec:The-non-reflexive-case} }shows its kinship to ideas developed
back in 1952 by Morgenstern, \cite{Ref195229}.

\section{The general solution theory - a variant of the contraction mapping
theorem}

\label{sec:gst}

Let $X$ be a Banach space and let $C:D(C)\subseteqq X\to X$ be a
densely defined closed linear operator with $0\in\rho(C)$. Then $\s X_{1}(C):=(D(C),\abs{C\cdot}_{X})$
is a Banach space. Moreover, define $\s X_{-1}(C)$ to be the completion
$(X,\abs{C^{-1}\cdot}_{X})^{\sim}$ of $(X,\abs{C^{-1}\cdot}_{X})$
and let $\s X_{0}(C):=X$. If the operator $C$ is clear from the
context, we omit the reference to the operator $C$ in the notation
of the associated spaces. We have 
\[
\s X_{1}\hookrightarrow\s X_{0}\hookrightarrow\s X_{-1}
\]
in the sense of continuous and dense embedding. Furthermore, $C:D(C)\subseteqq\s X_{0}\to\s X_{-1}$
is isometric and densely defined with dense range. Hence, $C$ can
be extended to an isometric isomorphism. We identify $C$ with its
extension. The fundamental solution theory is based on the following
variant of the contraction mapping theorem.

\begin{thm}\label{thm:bst} Let $F:\s X_{0}\to\s X_{-1}$ be a strict
contraction and let $\lip{F}(<1)$ be the best Lipschitz constant
for $F$. Then the equation 
\[
Cx=F(x)
\]
has a unique fixed point $x\in\s X_{0}$. If $y\in\s X_{0}$ and $n\in\N$
then the following estimates hold
\begin{align*}
\abs{C^{-1}F(y)-x} & \leqq\frac{\lip{F}}{1-\lip{F}}\abs{y-x},\\
\abs{\left(C^{-1}F\right)^{n}(y)-x} & \leqq\frac{\lip{F}}{1-\lip{F}}\abs{\left(C^{-1}F\right)^{n}(y)-\left(C^{-1}F\right)^{n-1}(y)},\\
\abs{\left(C^{-1}F\right)^{n}(y)-x} & \leqq\frac{\lip{F}^{n}}{1-\lip{F}}\abs{y-x}.
\end{align*}
 \end{thm}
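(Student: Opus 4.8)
The plan is to recast the operator equation $Cx=F(x)$ as a genuine fixed point problem on the Banach space $\s X_0=X$ and then invoke the classical contraction mapping principle. By the discussion preceding the theorem, $C$ extends to an isometric isomorphism $C\colon\s X_0\to\s X_{-1}$, so its inverse $C^{-1}\colon\s X_{-1}\to\s X_0$ is again an isometric isomorphism; in particular $\abs{C^{-1}w}=\abs{w}_{\s X_{-1}}$ for every $w\in\s X_{-1}$. Consequently $Cx=F(x)$ holds in $\s X_{-1}$ if and only if $x=C^{-1}F(x)$ in $\s X_0$, and it suffices to produce a unique fixed point of $G:=C^{-1}F$.

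First I would check that $G\colon\s X_0\to\s X_0$ is well defined and a strict contraction. Well-definedness is immediate, since $F$ maps into $\s X_{-1}$ while $C^{-1}$ maps $\s X_{-1}$ back onto $\s X_0$. For the contraction estimate, for $u,v\in\s X_0$ the isometry of $C^{-1}$ followed by the Lipschitz bound on $F$ yields $\abs{G(u)-G(v)}=\abs{C^{-1}\left(F(u)-F(v)\right)}=\abs{F(u)-F(v)}_{\s X_{-1}}\leqq\lip{F}\,\abs{u-v}$, so $G$ has Lipschitz constant at most $\lip{F}<1$. As $\s X_0=X$ is complete, Banach's fixed point theorem then furnishes a unique $x\in\s X_0$ with $x=G(x)$, that is, a unique solution of $Cx=F(x)$.

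For the quantitative statements I would abbreviate $y_n:=\left(C^{-1}F\right)^n(y)=G^n(y)$ and use $x=G(x)$ throughout. The a~priori bounds (the first and third displayed estimates) come from iterating the contraction property, $\abs{G^n(y)-x}=\abs{G^n(y)-G^n(x)}\leqq\lip{F}^{\,n}\abs{y-x}$, and then absorbing the harmless factor $1\leqq\left(1-\lip{F}\right)^{-1}$; the first estimate is simply the case $n=1$. The a~posteriori bound (the second estimate) is the only one requiring a genuine computation: writing $x-y_n=\sum_{k\geqq n}\left(y_{k+1}-y_k\right)$, which is legitimate because $y_m\to x$ by completeness, and using $\abs{y_{k+1}-y_k}\leqq\lip{F}^{\,k-n+1}\abs{y_n-y_{n-1}}$, the geometric series sums to $\frac{\lip{F}}{1-\lip{F}}\abs{y_n-y_{n-1}}$.

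I expect no real obstacle here. The single decisive observation is that the isometry built into the scale $\s X_0\hookrightarrow\s X_{-1}$ converts the comparatively weak hypothesis that $F\colon\s X_0\to\s X_{-1}$ contract \emph{between different spaces} into a bona fide self-map contraction $G=C^{-1}F$ on $\s X_0$, after which everything is the standard Banach fixed point machinery. The only step demanding mild care is the telescoping argument for the a~posteriori estimate, where one must ensure the interchange of norm and infinite sum is justified by the completeness of $\s X_0$ and the convergence of the geometric series.
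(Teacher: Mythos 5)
Your proposal is correct and follows exactly the paper's own route: exploit that $C^{-1}\colon\s X_{-1}\to\s X_{0}$ is an isometric isomorphism to turn $C^{-1}F$ into a strict contraction on $\s X_{0}$, then apply the contraction mapping theorem. The only difference is that you work out the three error estimates (via iteration and the telescoping series) explicitly, whereas the paper simply dismisses them as well-known consequences of Banach's fixed point theorem.
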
 \begin{proof} The operator $C^{-1}:\s X_{-1}\to\s X_{0}$
is an isometric isomorphism. Hence, $C^{-1}F(\cdot)$ is a strict
contraction in $\s X_{0}$. The contraction mapping theorem yields
the assertion. The estimates are well-known. \end{proof}

\begin{cor} Let $F:\s X_{1}\to\s X_{0}$ be a strict contraction.
Then 
\[
Cx=F(x)
\]
has a unique fixed point $x\in\s X_{1}$. \end{cor}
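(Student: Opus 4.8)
The plan is to reduce the assertion to Theorem \ref{thm:bst} by shifting the entire scale of spaces up by one index, exploiting that $C$ acts as an isometry between consecutive spaces of the scale. In the situation of Theorem \ref{thm:bst} the operator $C$ realizes an isometric isomorphism $\s X_0\to\s X_{-1}$; the corollary is the exact analogue one index higher, with the pair $(\s X_0,\s X_{-1})$ replaced by $(\s X_1,\s X_0)$.

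First I would record that $C\colon\s X_1\to\s X_0$ is an isometric isomorphism. Isometry is immediate from the definition $\s X_1=(D(C),\abs{C\cdot}_X)$, since for $x\in D(C)$ one has $\abs{Cx}_{\s X_0}=\abs{Cx}_X=\abs{x}_{\s X_1}$; bijectivity follows from $0\in\rho(C)$, which guarantees that $C^{-1}\colon X\to D(C)$ is a well-defined bounded bijection. Consequently $C^{-1}\colon\s X_0\to\s X_1$ is itself an isometric isomorphism. Next I would rewrite the problem as a fixed point equation: for $x\in\s X_1$ the identity $Cx=F(x)$ is equivalent to $x=C^{-1}F(x)$, where $C^{-1}F$ now maps $\s X_1$ into $\s X_1$. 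Since $F\colon\s X_1\to\s X_0$ is a strict contraction with best Lipschitz constant $\lip{F}<1$ and $C^{-1}$ is isometric, the composition $C^{-1}F\colon\s X_1\to\s X_1$ is again a strict contraction with the same Lipschitz constant. As $\s X_1$ is a Banach space, hence a complete metric space, the contraction mapping theorem delivers a unique fixed point $x\in\s X_1$, which is precisely the desired unique solution of $Cx=F(x)$.

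I do not expect a serious obstacle here; the content is essentially the observation that Theorem \ref{thm:bst} was proved by composing with the isometric isomorphism $C^{-1}$, and the same composition works verbatim one index up. The only point requiring a line of care is the isometric-isomorphism claim for $C\colon\s X_1\to\s X_0$, which mirrors the corresponding statement for $C\colon\s X_0\to\s X_{-1}$ established in the preamble to Theorem \ref{thm:bst}. Once this is in place, existence and uniqueness follow immediately from the contraction mapping theorem exactly as in the proof of Theorem \ref{thm:bst}, and the analogous a priori estimates would transfer in the same way should one wish to record them.
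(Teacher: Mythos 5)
Your proof is correct, but it takes a somewhat different route from the paper's. You argue directly in $\s X_{1}$: since $C\colon\s X_{1}\to\s X_{0}$ is an isometric isomorphism, the map $C^{-1}F\colon\s X_{1}\to\s X_{1}$ is a strict contraction on the Banach space $\s X_{1}$, and Banach's fixed point theorem finishes the job. In effect you re-run the proof of Theorem \ref{thm:bst} one index higher rather than invoking that theorem itself. The paper instead reduces to Theorem \ref{thm:bst} as a black box by conjugation: it considers $CF(C^{-1}\cdot)\colon\s X_{0}\to\s X_{-1}$, which inherits the strict contraction property because $C^{-1}\colon\s X_{0}\to\s X_{1}$ and the extension $C\colon\s X_{0}\to\s X_{-1}$ are isometric, obtains the unique fixed point $\tilde{x}\in\s X_{0}$ of $C\tilde{x}=CF(C^{-1}\tilde{x})$, and sets $x:=C^{-1}\tilde{x}$; uniqueness then requires the extra observation that any solution $u\in\s X_{1}$ of $Cu=F(u)$ yields a solution $Cu$ of the conjugated equation, whence $Cu=\tilde{x}$ and $u=x$. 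Both arguments rest on the same two ingredients, namely the isometric structure of the scale $\s X_{1}\hookrightarrow\s X_{0}\hookrightarrow\s X_{-1}$ and the contraction mapping theorem; yours is slightly more self-contained and delivers uniqueness for free, while the paper's conjugation automatically transports the a priori estimates recorded in Theorem \ref{thm:bst} to the present setting (after applying $C^{-1}$ they become estimates in the $\s X_{1}$-norm), which is worth noting should one want those estimates at this level of the scale.
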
 \begin{proof}
The mapping $CF(C^{-1}\cdot)$ satisfies the assumptions from Theorem
\ref{thm:bst}. Hence, there exists a unique fixed point $\tilde{x}\in\s X_{0}$
such that 
\[
C\tilde{x}=CF(C^{-1}\tilde{x}).
\]
 Therefore $x:=C^{-1}\tilde{x}\in\s X_{1}$ satisfies 
\[
Cx=F(x).
\]
 Now, let $u\in\s X_{1}$ satisfy $Cu=F(u)$. Then $Cu$ satisfies
the equation $C(Cu)=CF(u)=CF(C^{-1}(Cu))$ and thus, $Cu=\tilde{x}$,
which gives $u=x$. \end{proof}

\begin{thm} Let $F,G:\s X_{0}\to\s X_{-1}$ be Lipschitz continuous
and assume that the respective Lipschitz semi-norms, i.e., the best
Lipschitz constants, $\lip{F}$ and $\lip{G}$ satisfy 
\[
\frac{\lip{F}+\lip{G}}{2}<1.
\]
 Let $x,y\in\s X_{0}$ satisfy 
\[
Cx=F(x)\text{ and }Cy=G(y).
\]
 Then 
\[
\abs{x-y}_{\s X_{0}}\leqq\frac{1}{1-\frac{\lip{F}+\lip{G}}{2}}\sup_{u\in\s X_{0}}\abs{F(u)-G(u)}_{\s X_{-1}}.
\]
 \end{thm}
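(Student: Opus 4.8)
The plan is to exploit that both $x$ and $y$ are fixed points of their respective equations and that $C^{-1}$ acts isometrically from $\s X_{-1}$ to $\s X_{0}$. First I would rewrite the fixed point relations as $x=C^{-1}F(x)$ and $y=C^{-1}G(y)$. Since $C^{-1}:\s X_{-1}\to\s X_{0}$ is an isometric isomorphism (as recorded in the proof of Theorem \ref{thm:bst}), this gives
\[
\abs{x-y}_{\s X_{0}}=\abs{C^{-1}\left(F(x)-G(y)\right)}_{\s X_{0}}=\abs{F(x)-G(y)}_{\s X_{-1}},
\]
so the whole problem reduces to estimating $\abs{F(x)-G(y)}_{\s X_{-1}}$ from above.

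The decisive step, and the part I expect to require the right idea rather than a routine estimate, is to decompose $F(x)-G(y)$ symmetrically so that the Lipschitz semi-norms of $F$ and $G$ enter only through their average. Concretely, I would use the algebraic identity
\[
F(x)-G(y)=\tfrac{1}{2}\left(F(x)-F(y)\right)+\tfrac{1}{2}\left(G(x)-G(y)\right)+\tfrac{1}{2}\left(F(x)-G(x)\right)+\tfrac{1}{2}\left(F(y)-G(y)\right),
\]
which one verifies by expanding the right-hand side. Here the first two brackets are the ones that will be controlled by the Lipschitz hypotheses, whereas the remaining two brackets are each of the form $F(u)-G(u)$ and hence bounded by $\sup_{u\in\s X_{0}}\abs{F(u)-G(u)}_{\s X_{-1}}$.

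Applying the triangle inequality to this decomposition, together with $\abs{F(x)-F(y)}_{\s X_{-1}}\leqq\lip{F}\,\abs{x-y}_{\s X_{0}}$ and the analogous estimate for $G$, I would obtain
\[
\abs{F(x)-G(y)}_{\s X_{-1}}\leqq\frac{\lip{F}+\lip{G}}{2}\,\abs{x-y}_{\s X_{0}}+\sup_{u\in\s X_{0}}\abs{F(u)-G(u)}_{\s X_{-1}}.
\]
Combining this with the isometry identity from the first step yields the very same bound for $\abs{x-y}_{\s X_{0}}$ on the left-hand side.

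Finally, the hypothesis $\frac{\lip{F}+\lip{G}}{2}<1$ guarantees that the coefficient $1-\frac{\lip{F}+\lip{G}}{2}$ is strictly positive. I would therefore absorb the term $\frac{\lip{F}+\lip{G}}{2}\,\abs{x-y}_{\s X_{0}}$ from the right-hand side into the left, and divide by this positive coefficient, which produces exactly the claimed estimate. No compactness, reflexivity, or separability of $X$ is needed anywhere; the only genuinely nontrivial ingredient is the four-term symmetric splitting that forces the appearance of the averaged Lipschitz constant, and everything else is a direct consequence of the isometry of $C^{-1}$ and the triangle inequality.
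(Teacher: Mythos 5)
Your proposal is correct and follows essentially the same route as the paper's proof: the four-term symmetric splitting you use is exactly the paper's decomposition (the paper writes $-\tfrac{1}{2}C^{-1}(G(x)-F(x))$ where you write $+\tfrac{1}{2}(F(x)-G(x))$, which is the same term), and the conclusion via the isometry of $C^{-1}$, the Lipschitz bounds, and absorption of the $\frac{\lip{F}+\lip{G}}{2}\abs{x-y}$ term is precisely what the paper leaves implicit in ``This yields the assertion.'' No changes needed.
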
 \begin{proof} By assumption, we have 
\begin{align*}
x-y & =C^{-1}F(x)-C^{-1}G(y)\\
 & =\frac{1}{2}C^{-1}(F(x)-F(y))\\
 & \quad+\frac{1}{2}C^{-1}(G(x)-G(y))-\frac{1}{2}C^{-1}(G(x)-F(x))+\frac{1}{2}C^{-1}(F(y)-G(y)).
\end{align*}
 This yields the assertion. \end{proof}

\section{The reflexive case -- delay differential equations in $L_{p}$-spaces}

\label{sec:dde-lp}

For the whole section, let $p\in(1,\infty)$ and denote by $q$ the
conjugate exponent such that $\frac{1}{p}+\frac{1}{q}=1$. Let $X$
be a Banach space.

\subsection{Definition of the time-derivative}

Denoting by $\mu_{\nu}$ the weighted Lebesgue measure on $\R$ with
Radon-Nikodym derivative $x\mapsto e^{-\nu px}$ for $\nu\in\R$,
we define 
\[
W_{p,\nu}^{0}(\R;X):=L_{p,\nu}(\R;X):=L_{p}(\mu_{\nu};X).
\]
 Note that the mapping $e^{-\nu m}:L_{p,\nu}(\R;X)\to L_{p}(\R;X),f\mapsto\left(x\mapsto e^{-\nu x}f(x)\right)$
is isometrically isomorphic%
\footnote{The $m$ in the expression $e^{-\nu m}$ serves as reminder of multiplication.
We will frequently use this notation. For instance, let $\phi\colon\R\to\R$
and $\psi\colon\R\to E$ for some vector space $E$. Then we define
$\phi(m)\psi$ to be the mapping 
\[
\phi(m)\psi\colon\R\to E,t\mapsto\phi(t)\psi(t).
\]
}.

\begin{defn} We define 
\[
\partial_{\nu}:C_{c}^{\infty}(\R;X)\subseteqq L_{p,\nu}(\R;X)\to L_{p,\nu}(\R;X),f\mapsto f',
\]
where 
\[
C_{c}^{\infty}(\R;X):=\{\phi;\phi\text{ indefinitely differentiable, }\supp\phi\text{ compact}\}.
\]
 \end{defn}

The operator $\partial_{\nu}$ is clearly closable and its closure
coincides with the distributional derivative. Henceforth, we will
not distinguish notationally between $\partial_{\nu}$ and its closure.
In order to apply the general solution theory to $\partial_{\nu}$
in place of $C$, we need the following theorem:

\begin{thm} Assume $\nu>0$. Then we have that the convolution operator
$\1_{[0,\infty)}\ast:L_{p,\nu}(\R;X)\to L_{p,\nu}(\R;X)$ is continuous
with operator norm equal to $\frac{1}{\nu}$. Moreover, it holds $\left(\1_{[0,\infty)}\ast\right)^{-1}=\partial_{\nu}$.
\end{thm}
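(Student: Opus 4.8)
The plan is to transport the problem to the unweighted space $L_p(\R;X)$ via the isometric isomorphism $e^{-\nu m}\colon L_{p,\nu}(\R;X)\to L_p(\R;X)$ recorded above, turning the convolution into an ordinary $L_1$-convolution to which Young's inequality applies. On test functions $f\in C_c^\infty(\R;X)$ one computes directly $(\1_{[0,\infty)}\ast f)(x)=\int_{-\infty}^x f(y)\,\dd y$, so the operator $T:=\1_{[0,\infty)}\ast$ is the antiderivative and the asserted identity $T^{-1}=\partial_\nu$ is already plausible. I would then observe that, under the isometry, $T$ is conjugate to the operator $\tilde T:=e^{-\nu m}\,T\,e^{\nu m}$ on $L_p(\R;X)$ given by
\[
(\tilde T g)(x)=\int_0^\infty e^{-\nu s}\,g(x-s)\,\dd s=(k\ast g)(x),\qquad k(s):=e^{-\nu s}\,\1_{[0,\infty)}(s).
\]
Because $\abs{k}_{L_1(\R)}=\int_0^\infty e^{-\nu s}\,\dd s=\tfrac1\nu$, Young's inequality for the Banach-space-valued convolution gives $\abs{\tilde T g}_{L_p(\R;X)}\leqq\tfrac1\nu\,\abs{g}_{L_p(\R;X)}$; since $e^{-\nu m}$ is isometric, $T$ extends to a bounded operator of operator norm at most $\tfrac1\nu$.

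For the sharpness of the norm I would test with $g_R:=\1_{[0,R]}\,v$ for a fixed unit vector $v\in X$. A short computation gives $(\tilde T g_R)(x)=\tfrac1\nu(1-e^{-\nu x})\,v$ for $x\in[0,R]$, so restricting to an inner interval $[A,R]$ with $0\leqq A\leqq R$ yields $\abs{\tilde T g_R}_{L_p(\R;X)}^p\geqq(R-A)\bigl(\tfrac1\nu(1-e^{-\nu A})\bigr)^p$, while $\abs{g_R}_{L_p(\R;X)}^p=R$. Letting first $R\to\infty$ and then $A\to\infty$ forces the operator norm to be at least $\tfrac1\nu$, and with the upper bound this pins it to $\tfrac1\nu$.

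It then remains to identify $T$ with $\partial_\nu^{-1}$ by checking the two one-sided inverse relations. That $\partial_\nu T f=f$ for all $f\in L_{p,\nu}(\R;X)$ follows from the fundamental theorem of calculus: as $L_{p,\nu}$-functions are locally integrable, $Tf$ is locally absolutely continuous with distributional derivative $f$, and since both $Tf$ and $f$ lie in $L_{p,\nu}(\R;X)$ and $\partial_\nu$ coincides with the distributional derivative, we obtain $Tf\in D(\partial_\nu)$ with $\partial_\nu Tf=f$. For the reverse relation I take $g\in D(\partial_\nu)$ and write $g(x)=g(a)+\int_a^x g'(y)\,\dd y$ for its absolutely continuous representative. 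The Hölder estimate $\int_{-\infty}^0\abs{g'(y)}_X\,\dd y\leqq\abs{g'}_{L_{p,\nu}(\R;X)}\bigl(\int_{-\infty}^0 e^{\nu q y}\,\dd y\bigr)^{1/q}<\infty$ shows that $g'$ is absolutely integrable near $-\infty$, so that $\lim_{a\to-\infty}g(a)$ exists.

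The step I expect to be the crux — and would treat most carefully — is that this limit must vanish, which is precisely where $\nu>0$ is used: a nonzero limit would force $\abs{g(a)}_X^p\,e^{-\nu p a}\to\infty$ as $a\to-\infty$, contradicting $g\in L_{p,\nu}(\R;X)$. Hence $g(x)=\int_{-\infty}^x g'(y)\,\dd y=(T\partial_\nu g)(x)$, and the two relations together give $(\1_{[0,\infty)}\ast)^{-1}=\partial_\nu$. The remaining points — measurability for the vector-valued convolution, the density argument extending $(Tf)(x)=\int_{-\infty}^x f$ beyond test functions, and the identification of $\partial_\nu$ with the distributional derivative — are routine, and I would only indicate them.
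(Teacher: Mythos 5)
Your proposal is correct and takes essentially the same approach as the paper: the upper bound via the isometry $e^{-\nu m}$ and Young's inequality with the kernel $e^{-\nu s}\1_{[0,\infty)}(s)$ of $L_1$-norm $\frac{1}{\nu}$, sharpness via (up to normalization and the same isometry) the very family $\1_{[0,n]}$-type test functions the paper uses, and the inverse relation by differentiating the convolution integral. Your elaboration of the crux of the inverse relation --- integrability of $g'$ near $-\infty$ by H\"older and the vanishing of $\lim_{a\to-\infty}g(a)$ for $g\in D(\partial_\nu)$, both of which use $\nu>0$ --- is a correct filling-in of details the paper compresses into a single sentence and a reference.
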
 

\begin{proof} Let $\phi\in C_{c}^{\infty}(\R;X)$. Then we have,
invoking Young's inequality, that 
\begin{align*}
\abs{\1_{[0,\infty)}\ast\phi}_{p,\nu} & =\abs{e^{-\nu m}\left(\1_{[0,\infty)}\ast\phi\right)}_{p,0}\\
 & =\abs{\left(e^{-\nu m}\1_{[0,\infty)}\right)\ast\left(e^{-\nu m}\phi\right)}_{p,0}\\
 & \leqq\int_{\R}\abs{e^{-\nu t}\1_{[0,\infty)}(t)}\dd t\abs{e^{-\nu m}\phi}_{p,0}\\
 & =\frac{1}{\nu}\abs{\phi}_{p,\nu}.
\end{align*}
Now, for $n\in\N$ and some $x\in X$ with $\abs{x}=1$ define $\phi_{n}(t)\coloneqq\frac{1}{n^{1/p}}e^{\nu m}\1_{[0,n]}(t)x$
for all $t\in\R$. For $n\in\N$ let $u_{n}\coloneqq\1_{[0,\infty)}\ast\phi_{n}$.
It is easy to that $u_{n}(t)=\frac{1}{\nu n^{1/p}}\left(e^{\nu\min\{t,n\}}-1\right)x$
for all $t\in\R$ and that $\abs{\phi_{n}}=1$ for all $n\in\N$.
Moreover, an easy computation shows that $\abs{u_{n}}\to\frac{1}{\nu}$
as $n\to\infty$, for the details we refer to \cite[Proposition 2.2]{Trostorff}.

The equality $\left(\1_{[0,\infty)}\ast\right)^{-1}=\partial_{\nu}$
follows by differentiation of the convolution integral. \end{proof}

\begin{rem} If $\nu<0$, then a similar result holds. The respective
inverse, however, is now given by $\left(-\1_{(-\infty,0]}\ast\right)^{-1}$.
\end{rem}

Now, we are in the situation of our general solution theory with $C=\partial_{\nu}$.

For convenience, we describe the space $\s X_{-1}(\partial_{\nu})$
in more detail. We let $W_{p,\nu}^{1}(\R;X):=\s X_{1}(\partial_{\nu})$.
\begin{thm}\label{thm:dualspace} Assume that $X$ is reflexive%
\footnote{Note that, as a consequence, we have for any $\sigma$-finite measure
space $(\Omega,\mu)$ the property $L_{p}(\mu;X)'=L_{q}(\mu;X')$
(cf. \cite[p. 82: Corollary 4 and p.98: Theorem 1]{DiUh}). With the
help of \cite[p. 98: Theorem 1]{DiUh}, it thus suffices to assume
that $X'$ has the Radon-Nikodym property. %
}. We have 
\[
\left(W_{q,-\nu}^{1}(\R;X)\right)'=\left(L_{p,\nu}(\R;X');\abs{\partial_{\nu}^{-1}\cdot}_{p,\nu}\right)^{\tilde{}},
\]
 in the sense of the dual pairing 
\[
L_{p,\nu}(\R;X')\times L_{q,-\nu}(\R;X)\ni(\phi,\psi)\mapsto\intop_{\mathbb{R}}\langle\phi(t),\psi(t)\rangle_{X',X}\dd t=:\langle\phi,\psi\rangle_{0,0}.
\]
\par \end{thm}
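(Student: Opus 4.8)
The plan is to reduce the claim to the classical vector-valued duality $\left(L_{q,-\nu}(\R;X)\right)'=L_{p,\nu}(\R;X')$ and to transport it through the derivative by a single integration-by-parts identity. The organizing isometry is $\partial_{-\nu}\colon W_{q,-\nu}^1(\R;X)\to L_{q,-\nu}(\R;X)$, which is an isometric isomorphism: by the convolution theorem above, applied with the weight $-\nu<0$ (cf.\ the Remark), $0\in\rho(\partial_{-\nu})$, and the norm of $\s X_1(\partial_{-\nu})=W_{q,-\nu}^1(\R;X)$ is $\abs{\partial_{-\nu}\cdot}_{q,-\nu}$ by definition. On the dual side I only use that the right-hand space $\s X_{-1}(\partial_\nu)$ carries, by construction, the norm $\abs{\partial_\nu^{-1}\cdot}_{p,\nu}$ and contains $L_{p,\nu}(\R;X')$ as a dense subspace. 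First I would check that the weight-free bracket $\langle\cdot,\cdot\rangle_{0,0}$ realizes the base-space duality: factoring $\phi=e^{\nu m}(e^{-\nu m}\phi)$ and $\psi=e^{-\nu m}(e^{\nu m}\psi)$ and applying H\"older gives $\abs{\langle\phi,\psi\rangle_{0,0}}\leqq\abs{\phi}_{p,\nu}\abs{\psi}_{q,-\nu}$, while the isometries $e^{-\nu m}$ and $e^{\nu m}$ identify it with the unweighted $L_p(\R;X')$-$L_q(\R;X)$ pairing. Invoking reflexivity of $X$ (equivalently, the Radon-Nikodym property of $X'$, as in the footnote), the map $\phi\mapsto\langle\phi,\cdot\rangle_{0,0}$ is then an isometric isomorphism $L_{p,\nu}(\R;X')\to\left(L_{q,-\nu}(\R;X)\right)'$.

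The key step is the adjointness relation $\langle\partial_\nu u,\psi\rangle_{0,0}=-\langle u,\partial_{-\nu}\psi\rangle_{0,0}$ for $u\in W_{p,\nu}^1(\R;X')$ and $\psi\in W_{q,-\nu}^1(\R;X)$. For $u,\psi\in C_c^\infty$ this is ordinary integration by parts with no boundary term; both sides are continuous in $(u,\psi)$ for the norms of $W_{p,\nu}^1(\R;X')\times W_{q,-\nu}^1(\R;X)$ (using $\s X_1\hookrightarrow\s X_0$), so the identity extends by density of $C_c^\infty$. Specializing $u=\partial_\nu^{-1}\phi$ for $\phi\in L_{p,\nu}(\R;X')$ yields $\langle\phi,\psi\rangle_{0,0}=-\langle\partial_\nu^{-1}\phi,\partial_{-\nu}\psi\rangle_{0,0}$ and hence $\abs{\langle\phi,\psi\rangle_{0,0}}\leqq\abs{\partial_\nu^{-1}\phi}_{p,\nu}\abs{\partial_{-\nu}\psi}_{q,-\nu}$. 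Thus $\phi\mapsto\langle\phi,\cdot\rangle_{0,0}$ is contractive from $(L_{p,\nu}(\R;X'),\abs{\partial_\nu^{-1}\cdot}_{p,\nu})$ into $\left(W_{q,-\nu}^1(\R;X)\right)'$ and extends to the completion $\s X_{-1}(\partial_\nu)$.

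It remains to see that this extension is norm-preserving and onto. For the norm, substitute $g:=\partial_{-\nu}\psi$, which ranges over all of $L_{q,-\nu}(\R;X)$ with $\abs{\psi}_{W_{q,-\nu}^1}=\abs{g}_{q,-\nu}$, so that the operator norm of $\langle\phi,\cdot\rangle_{0,0}$ equals $\sup_{g\neq0}\abs{\langle\partial_\nu^{-1}\phi,g\rangle_{0,0}}/\abs{g}_{q,-\nu}=\abs{\partial_\nu^{-1}\phi}_{p,\nu}$ by the base-space duality, i.e.\ exactly the $\s X_{-1}(\partial_\nu)$-norm of $\phi$. For surjectivity, given $T\in\left(W_{q,-\nu}^1(\R;X)\right)'$ I set $\Lambda:=T\circ\partial_{-\nu}^{-1}\in\left(L_{q,-\nu}(\R;X)\right)'$, represent $\Lambda=\langle h,\cdot\rangle_{0,0}$ with $h\in L_{p,\nu}(\R;X')$, and put $\phi:=-\partial_\nu h\in\s X_{-1}(\partial_\nu)$, so that $\partial_\nu^{-1}\phi=-h$; the identity of the previous step, extended to $\s X_{-1}(\partial_\nu)$ by continuity, then gives $\langle\phi,\psi\rangle_{0,0}=-\langle\partial_\nu^{-1}\phi,\partial_{-\nu}\psi\rangle_{0,0}=\langle h,\partial_{-\nu}\psi\rangle_{0,0}=T(\psi)$ for all $\psi\in W_{q,-\nu}^1(\R;X)$.

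The hard part will be the adjointness relation, precisely the vanishing of the boundary terms in the integration by parts in the weighted setting. This is exactly where the positivity of $\nu$ and the opposite weights $\mu_\nu$ and $\mu_{-\nu}$ cooperate, forcing $\langle u(t),\psi(t)\rangle$ to decay as $t\to\pm\infty$; I would avoid estimating boundary terms directly and instead obtain the identity by density of $C_c^\infty$, as above, where no boundary term is present. The only other point requiring care is the vector-valued duality $\left(L_{q,-\nu}(\R;X)\right)'=L_{p,\nu}(\R;X')$, whose validity rests on the Radon-Nikodym property of $X'$ guaranteed by reflexivity.
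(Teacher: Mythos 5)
Your proof is correct, and its skeleton coincides with the paper's: both rest on the adjointness identity $\langle\partial_{\nu}u,\psi\rangle_{0,0}=-\langle u,\partial_{-\nu}\psi\rangle_{0,0}$ (integration by parts on $C_{c}^{\infty}$, extended by density and the continuity of both sides), followed by H\"older in the unweighted pairing and the vector-valued duality $L_{q,0}(\R;X)'=L_{p,0}(\R;X')$ supplied by reflexivity. Where you genuinely diverge is in the two nontrivial verifications. For the lower bound (isometry), the paper produces a norming element: it finds $\tilde{\psi}$ in the unit ball with $\langle e^{-\nu m}\partial_{\nu}^{-1}\phi,\tilde{\psi}\rangle_{0,0}=1$ and transports it back via $\psi=-\partial_{-\nu}^{-1}(e^{\nu m})^{-1}\tilde{\psi}$, which tacitly uses attainment of the dual norm; you instead substitute $g=\partial_{-\nu}\psi$ and compute the operator norm as a supremum, identifying it with $\abs{\partial_{\nu}^{-1}\phi}_{p,\nu}$ through the isometric embedding $L_{p,\nu}(\R;X')\to\left(L_{q,-\nu}(\R;X)\right)'$ -- a step that needs no attainment and in fact no Radon--Nikodym property at all. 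For the ``onto'' part, the paper argues by density plus Hahn--Banach: a functional on $\left(W_{q,-\nu}^{1}(\R;X)\right)'$ vanishing on $L_{p,\nu}(\R;X')$ lies, by reflexivity of $W_{q,-\nu}^{1}(\R;X)$, in $W_{q,-\nu}^{1}(\R;X)$ itself and is therefore zero, so the isometric image is dense and the completion fills out the dual. You instead construct a preimage explicitly: represent $T\circ\partial_{-\nu}^{-1}\in\left(L_{q,-\nu}(\R;X)\right)'$ by $h\in L_{p,\nu}(\R;X')$ and take $\phi=-\partial_{\nu}h\in\s X_{-1}(\partial_{\nu})$, checking $T=\langle\phi,\cdot\rangle_{0,0}$ via the extended adjointness identity. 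Your route is more constructive, concentrates the use of reflexivity in a single place (the Riesz-type representation of $\left(L_{q,-\nu}(\R;X)\right)'$, i.e.\ the RNP of $X'$), and makes the conjugation by the isometric isomorphisms $\partial_{\pm\nu}$ explicit; the paper's density argument is shorter on the page but additionally invokes reflexivity of the Sobolev space and norm attainment. Both are valid under the stated hypothesis.
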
 \begin{proof} Let $\phi\in L_{p,\nu}(\R;X')$ be
such that $\abs{\partial_{\nu}^{-1}\phi}=1$. Then, for $\psi\in W_{q,-\nu}^{1}(\R;X)$
with $\abs{\partial_{-\nu}\psi}_{q,-\nu}=1$ we have 
\begin{align*}
|\langle\phi,\psi\rangle_{0,0}| & =|\langle\partial_{\nu}\partial_{\nu}^{-1}\phi,\psi\rangle_{0,0}|\\
 & =|-\langle\partial_{\nu}^{-1}\phi,\partial_{-\nu}\psi\rangle_{0,0}|\\
 & =|\langle e^{-\nu m}\partial_{\nu}^{-1}\phi,e^{\nu m}\partial_{-\nu}\psi\rangle_{0,0}|\\
 & \leqq\abs{e^{-\nu m}\partial_{\nu}^{-1}\phi}_{p,0}\abs{e^{\nu m}\partial_{-\nu}\psi}_{q,0}=1.
\end{align*}
This establishes the continuity of $\iota\colon L_{p,\nu}(\R;X')\to W_{q,-\nu}^{1}(\R;X)',f\mapsto f$
with norm less than or equal to $1$. Now, since $L_{q,0}(\R;X)'=L_{p,0}(\R;X')$
by the reflexivity of $X$, we find $\tilde{\psi}$ in the unit ball
of $L_{q,0}(\R;X')$ such that $\langle e^{-\nu m}\partial_{\nu}^{-1}\phi,\tilde{\psi}\rangle_{0,0}=1$.
Defining $\psi:=-\partial_{-\nu}^{-1}(e^{\nu m})^{-1}\tilde{\psi}$,
we get that $\abs{\psi}_{q,-\nu,1}=1$. Thus, $\iota$ is isometric.
\par It remains to prove that $L_{p,\nu}(\R;X')$ is dense in $\left(W_{q,-\nu}^{1}(\R;X)\right)'$.
Let $\phi$ be a continuous linear functional on $\left(W_{q,-\nu}^{1}(\R;X)\right)'$
vanishing on $L_{p,\nu}(\R;X')$. By the reflexivity of $X$, we deduce
that $\phi\in W_{q,-\nu}^{1}(\R;X)$. Hence, $\phi=0$. \end{proof}

For our general solution theory the following corollary will be useful.
\begin{cor} Let $\ell\in\{1,0,-1\}$ and $\nu\in\R\setminus\{0\}$
and assume $X$ to be reflexive. Then, we have 
\[
W_{q,-\nu}^{-\ell}(\R;X')'=W_{p,\nu}^{\ell}(\R;X).
\]
 \end{cor}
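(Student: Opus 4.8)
The plan is to obtain all three cases from the single duality already recorded in Theorem~\ref{thm:dualspace}, exploiting that every space occurring here is reflexive. First I would record this reflexivity. For $C=\partial_{\mu}$ the general construction of Section~\ref{sec:gst} makes the (extended) operator $\partial_{\mu}$ an isometric isomorphism $W_{r,\mu}^{1}(\R;Z)\to W_{r,\mu}^{0}(\R;Z)$ and $W_{r,\mu}^{0}(\R;Z)\to W_{r,\mu}^{-1}(\R;Z)$; hence for each $k\in\{-1,0,1\}$ the space $W_{r,\mu}^{k}(\R;Z)$ is isometrically isomorphic to $L_{r,\mu}(\R;Z)$. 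For $1<r<\infty$ and reflexive $Z$ the latter is reflexive, by applying the vector-valued $L_{r}$--$L_{r'}$ duality of the footnote ($r'$ the conjugate exponent) twice. Since reflexivity of $X$ entails reflexivity of $X'$, every space below is reflexive.

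The cases $\ell=0$ and $\ell=-1$ are then immediate. For $\ell=0$ the assertion $L_{q,-\nu}(\R;X')'=L_{p,\nu}(\R;X)$ is the weighted vector-valued $L_{q}$--$L_{p}$ duality: transporting to the unweighted setting by the isometry $e^{\nu m}$ and using $L_{q}(\R;X')'=L_{p}(\R;X'')=L_{p}(\R;X)$ (the footnote together with $X''=X$), a short Hölder estimate identifies the representing functional as an element of $L_{p,\nu}(\R;X)$ acting through $(g,f)\mapsto\int_{\R}\langle g(t),f(t)\rangle_{X',X}\,\dd t$. For $\ell=-1$ the assertion is Theorem~\ref{thm:dualspace} applied verbatim with $X$ replaced by the reflexive space $X'$, which gives $(W_{q,-\nu}^{1}(\R;X'))'=W_{p,\nu}^{-1}(\R;X'')=W_{p,\nu}^{-1}(\R;X)$.

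The remaining case $\ell=1$ is where reflexivity does the real work, since here the dual of a $W^{-1}$-space is sought. First I would invoke Theorem~\ref{thm:dualspace} a second time, now with the conjugate exponents $p$ and $q$ interchanged and with $\nu$ replaced by $-\nu$; this is legitimate because $(q,p)$ is again a conjugate pair, $-\nu\neq 0$, and the Remark following the convolution theorem guarantees the statement for either sign of the weight. Taking $Z=X$, this produces $(W_{p,\nu}^{1}(\R;X))'=W_{q,-\nu}^{-1}(\R;X')$, so that $W_{q,-\nu}^{-1}(\R;X')$ is realized as the dual of the reflexive space $W_{p,\nu}^{1}(\R;X)$. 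Passing to the bidual then yields precisely $(W_{q,-\nu}^{-1}(\R;X'))'=W_{p,\nu}^{1}(\R;X)$.

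The hard part will be this last, $\ell=1$, step: one must check that the canonical identification of $W_{p,\nu}^{1}(\R;X)$ with its bidual is compatible with the concrete integral pairing $\langle\cdot,\cdot\rangle_{0,0}$, so that the dual of $W_{q,-\nu}^{-1}(\R;X')$ is represented by that \emph{same} pairing rather than merely isomorphic to $W_{p,\nu}^{1}(\R;X)$ abstractly. I expect this to reduce to tracing the evaluation map $J\colon Z\to Z''$ through the isometry of Theorem~\ref{thm:dualspace} and verifying, on the dense subspace $L_{p,\nu}(\R;X)$ where the pairing is a genuine integral, that $f$ is sent to $\phi\mapsto\langle\phi,f\rangle_{0,0}$; the density statements already established in the proof of Theorem~\ref{thm:dualspace} are exactly what is needed to extend this from the dense subspace to all of $W_{q,-\nu}^{-1}(\R;X')$.
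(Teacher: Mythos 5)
Your proposal is correct and takes essentially the same approach as the paper, whose entire proof is the remark that the result is clear from Theorem \ref{thm:dualspace}; your case distinction (the theorem applied to $X'$ for $\ell=-1$, the weighted vector-valued $L_{q}$--$L_{p}$ duality for $\ell=0$, and the theorem with exponents and weights interchanged followed by the reflexivity/bidual identification for $\ell=1$) simply supplies the details that one-line proof leaves implicit. In particular, your attention to the compatibility of the canonical bidual embedding with the concrete pairing $\langle\cdot,\cdot\rangle_{0,0}$ is exactly the right (and standard) point to verify in the $\ell=1$ case.
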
 \begin{proof} The result is clear as a consequence of
Theorem \ref{thm:dualspace}. \end{proof}

\subsection{Solution theory}

We restate the basic solution theory in our particular situation.
However, we shall restrict ourselves to a particular form of right-hand
sides. We will need the following types of additional test function
spaces: 
\[
C_{c}^{\infty,+}(\R;X):=\{\phi;\phi\text{ indefinitely differentiable, }\sup\supp\phi<\infty,\exists n\in\N:\supp\phi^{(n)}\text{ compact}\}
\]
 and 
\[
C_{c}^{\infty,+}(\R;X)':=\{u:C_{c}^{\infty,+}(\R;X)\to\K;u\text{ linear}\}.
\]
We note here that we do not assume any specific continuity property
of the functionals in $C_{c}^{\infty,+}(\R;X)'$. The particular continuity
property will be assumed in the following definition.

\begin{defn}[eventually $(k,\ell)$-contracting] Let $k,\ell\in\{1,0,-1\}$
and let $Y$ be a reflexive Banach space. A mapping $F:C_{c}^{\infty}(\R;X)\to C_{c}^{\infty,+}(\R;Y')'$
is called \emph{eventually $(k,\ell)$-Lipschitz continuous} if the
following assumptions are satisfied: \begin{itemize} 

\item there exists $\nu_{0}>0$ such that for all $\nu\geqq\nu_{0}$
we have $F(0)\in W_{q,-\nu}^{-\ell}(\R;Y')'$, 

\item there exists $\nu_{1}>0$ and $C>0$ such that for all $\nu\geqq\nu_{1}$,
$u,v\in C_{c}^{\infty}(\R;X),\phi\in C_{c}^{\infty,+}(\R;Y')$ we
have 
\[
\abs{F(u)(\phi)-F(v)(\phi)}\leqq C\abs{\phi}_{W_{q,-\nu}^{-\ell}(\R;Y')}\abs{u-v}_{W_{p,\nu}^{k}(\R;X)}.
\]
 \end{itemize} For an eventually $(k,\ell)$-Lipschitz continuous
mapping $F$, we denote by $F_{\nu}$ its Lipschitz continuous extension
from $W_{p,\nu}^{k}(\R;X)$ to $W_{p,\nu}^{\ell}(\R;Y)$. Moreover,
denote by $\lip{F_{\nu}}$ the infimum over all possible Lipschitz
constants for $F_{\nu}$. We call $F$ \emph{eventually $(k,\ell)$-contracting}
if $\limsup_{\nu\to\infty}\lip{F_{\nu}}<1$. \end{defn}

\begin{thm}[$L_p$-solution theory] Let $X$ be reflexive and let
$F:C_{c}^{\infty}(\R;X)\to C_{c}^{\infty,+}(\R;X')'$ be $(0,-1)$-contracting.
Then, for $\nu$ large enough, the equation 
\[
\partial_{\nu}u=F_{\nu}(u)
\]
 admits a unique solution $u\in L_{p,\nu}(\R;X)$. \end{thm}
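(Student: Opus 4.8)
The plan is to recognize this statement as a direct application of Theorem \ref{thm:bst} with $C=\partial_{\nu}$, the operator which by the preceding theorem is continuously invertible for $\nu>0$ and therefore, in the notation of Section \ref{sec:gst}, extends to an isometric isomorphism $\partial_{\nu}\colon\s X_{0}(\partial_{\nu})\to\s X_{-1}(\partial_{\nu})$ with $\s X_{0}(\partial_{\nu})=L_{p,\nu}(\R;X)$. The real content is to check that the $(0,-1)$-contracting hypothesis delivers, for $\nu$ large enough, a strict contraction $F_{\nu}\colon\s X_{0}(\partial_{\nu})\to\s X_{-1}(\partial_{\nu})$; once this is in place the abstract fixed point theorem yields existence and uniqueness without further work.

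First I would fix $\nu\geqq\max\{\nu_{0},\nu_{1}\}$ and identify the target space. By the corollary following Theorem \ref{thm:dualspace}, which is where reflexivity of $X$ enters decisively, one has $\s X_{-1}(\partial_{\nu})=W_{p,\nu}^{-1}(\R;X)=\left(W_{q,-\nu}^{1}(\R;X')\right)'$ realized through the dual pairing $\langle\cdot,\cdot\rangle_{0,0}$. The two clauses of the definition of $(0,-1)$-Lipschitz continuity then say exactly that, for each $u\in C_{c}^{\infty}(\R;X)$, the functional $\phi\mapsto F(u)(\phi)$ is bounded on $C_{c}^{\infty,+}(\R;X')$ for the $W_{q,-\nu}^{1}$-norm: splitting $F(u)(\phi)=\left(F(u)(\phi)-F(0)(\phi)\right)+F(0)(\phi)$, the first summand is dominated by $C\abs{\phi}_{W_{q,-\nu}^{1}(\R;X')}\abs{u}_{L_{p,\nu}(\R;X)}$ and the second by $\abs{F(0)}_{\s X_{-1}(\partial_{\nu})}\abs{\phi}_{W_{q,-\nu}^{1}(\R;X')}$, using the first clause $F(0)\in W_{q,-\nu}^{1}(\R;X')'$. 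Since $C_{c}^{\infty}(\R;X')\subseteq C_{c}^{\infty,+}(\R;X')$ is dense in $W_{q,-\nu}^{1}(\R;X')$ (it is a core for $\partial_{-\nu}$), each $F(u)$ extends uniquely to an element of $\s X_{-1}(\partial_{\nu})$.

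Next I would upgrade $F$ to a map between the abstract spaces. Read through the same duality, the Lipschitz clause becomes $\abs{F(u)-F(v)}_{\s X_{-1}(\partial_{\nu})}\leqq C\abs{u-v}_{L_{p,\nu}(\R;X)}$ for all $u,v\in C_{c}^{\infty}(\R;X)$. As $C_{c}^{\infty}(\R;X)$ is dense in $\s X_{0}(\partial_{\nu})=L_{p,\nu}(\R;X)$, this extends uniquely to a Lipschitz map $F_{\nu}\colon\s X_{0}(\partial_{\nu})\to\s X_{-1}(\partial_{\nu})$ with $\lip{F_{\nu}}\leqq C$, which is precisely the extension named in the definition. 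The contracting assumption $\limsup_{\nu\to\infty}\lip{F_{\nu}}<1$ then provides a threshold $\nu^{*}\geqq\max\{\nu_{0},\nu_{1}\}$ beyond which $\lip{F_{\nu}}<1$, so that $F_{\nu}$ is a strict contraction for every $\nu\geqq\nu^{*}$.

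For such $\nu$, Theorem \ref{thm:bst} applies verbatim with $C=\partial_{\nu}$: the composition $\partial_{\nu}^{-1}F_{\nu}\colon\s X_{0}(\partial_{\nu})\to\s X_{0}(\partial_{\nu})$ is a strict contraction, and hence $\partial_{\nu}u=F_{\nu}(u)$ possesses a unique fixed point $u\in\s X_{0}(\partial_{\nu})=L_{p,\nu}(\R;X)$, as claimed. I expect the main obstacle to be the first step: the functional $F(u)$ is a priori only algebraically defined on the narrow test space $C_{c}^{\infty,+}(\R;X')$ with no assumed continuity, so some care is needed to confirm, via the duality identification together with both hypotheses, that it genuinely represents an element of $\s X_{-1}(\partial_{\nu})$. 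After that the argument is purely the abstract contraction principle.
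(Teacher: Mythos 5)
Your proposal is correct and follows exactly the route the paper takes: the paper's entire proof is the remark that the statement is a special case of Theorem \ref{thm:bst} with $C=\partial_{\nu}$, and your write-up simply makes explicit the details the paper leaves implicit (the identification $\s X_{-1}(\partial_{\nu})=W_{p,\nu}^{-1}(\R;X)=\left(W_{q,-\nu}^{1}(\R;X')\right)'$ via the corollary to Theorem \ref{thm:dualspace}, where reflexivity enters, and the density argument producing the extension $F_{\nu}$ with $\lip{F_{\nu}}<1$ for large $\nu$). Nothing in your argument deviates from or adds a gap to the paper's intended reasoning.
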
 \begin{proof}
This result is a special case of Theorem \ref{thm:bst}. \end{proof}

\begin{rem} The analogous result also holds for $(1,0)$-contracting
mappings. Moreover, since the norm of $\partial_{\nu}^{-1}$ as a
mapping from $L_{p,\nu}$ into itself is bounded by $\frac{1}{\nu}$,
we also get a solution theory for $(0,0)$- and $(1,1)$-Lipschitz
mappings, cf. \cite[Corollary 3.4]{Kalauch}. \end{rem}

As causality is a characterizing feature of time-evolution, we are
particularly interested in establishing causality of the solution
operator. 

\begin{defn}[Causality]\label{As-causality-is} Let $E,F$ be vector
spaces. A mapping $W:D(W)\subseteqq E^{\R}\to F^{\R}$ is called \emph{causal}
if for all $x,y\in D(W)$ and $t\in\R$ we have 
\[
\chi_{\R_{<t}}(m)(x-y)=0\Rightarrow\chi_{\R_{<t}}(m)(W(x)-W(y))=0.
\]
 \end{defn} Similar to \cite[Definition 4.3]{Kalauch}, we have to
define a notion of distributional integrals or distributional convolutions.
\begin{defn} Let $w\in C_{c}^{\infty,+}(\R;X)'$. Then we define
\[
\1_{[0,\infty)}\ast w:C_{c}^{\infty,+}(\R;X)\to\K,\phi\mapsto w(\1_{(-\infty,0]}\ast\phi).
\]
 \end{defn} \begin{rem} Assume that $X$ is reflexive. For $w\in W_{p,\nu}^{-1}(\R;X)$
we have that $\1_{[0,\infty)}\ast w=\partial_{\nu}^{-1}w$. Indeed,
by Theorem \ref{thm:dualspace}, we have $w\in W_{q,-\nu}^{1}(\R;X')'$
and thus, for $\phi\in C_{c}^{\infty,+}(\R;X')$, we get that 
\[
\1_{[0,\infty)}\ast w(\phi)=w(\1_{(-\infty,0]}\ast\phi)=w(-\partial_{-\nu}^{-1}\phi)=\langle w,-\partial_{-\nu}^{-1}\phi\rangle_{0,0}=\langle\partial_{\nu}^{-1}w,\phi\rangle_{0,0}=\partial_{\nu}^{-1}w(\phi).
\]
 \end{rem} \begin{thm}[Causality]\label{thm:causality} Assume that
$X$ is reflexive. Let $F:C_{c}^{\infty}(\R;X)\to C_{c}^{\infty,+}(\R;X')'$
be $(0,-1)$ contracting. Then $\partial_{\nu}^{-1}F$ is causal.
\end{thm}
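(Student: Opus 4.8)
The plan is to read off causality of the composition $\partial_\nu^{-1}F$ directly from two structural facts, \emph{without} assuming that $F$ itself is causal. Interpreting $\partial_\nu^{-1}F$ as the map $\partial_\nu^{-1}F_\nu\colon L_{p,\nu}(\R;X)\to L_{p,\nu}(\R;X)$, the first fact is that, by the preceding remark, $\partial_\nu^{-1}=\chi_{[0,\infty)}\ast$ acts on the distributional level through $\phi\mapsto\chi_{(-\infty,0]}\ast\phi$, i.e.\ it only ever integrates over the past. The second, and decisive, fact is that the defining $(0,-1)$-Lipschitz estimate holds with \emph{one and the same constant} $C$ for \emph{all} $\nu\geq\nu_1$; it is this uniformity in $\nu$, and not any separate causality hypothesis, that prevents the future from influencing the past. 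I would first establish the claim for smooth compactly supported data and then extend by continuity.

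So fix $t\in\R$ and $u,v\in C_c^\infty(\R;X)$ with $\chi_{\R_{<t}}(u-v)=0$, so that $u-v$ is compactly supported in $[t,\infty)$. The element $w:=\partial_\nu^{-1}(F(u)-F(v))\in L_{p,\nu}(\R;X)$ is well defined, and to prove $\chi_{\R_{<t}}w=0$ it suffices to show $\langle w,\phi\rangle_{0,0}=0$ for every $\phi\in C_c^\infty(\R;X')$ with $\supp\phi\subseteq(-\infty,t)$. The distributional convolution identity gives $\langle w,\phi\rangle_{0,0}=(F(u)-F(v))(\psi)$ with $\psi:=\chi_{(-\infty,0]}\ast\phi$, where $\psi(s)=\int_s^\infty\phi(\tau)\,d\tau$; setting $t-\delta:=\sup\supp\phi<t$, one checks that $\psi\in C_c^{\infty,+}(\R;X')$, that $\supp\psi\subseteq(-\infty,t-\delta]$, and that $\psi'=-\phi$. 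Now $(F(u)-F(v))(\psi)$ is a fixed scalar, independent of $\nu$ (it is the value of the fixed functional $F(u)-F(v)$ at the fixed test function $\psi$), so I may estimate it at any weight $\nu'\geq\nu_1$: the uniform Lipschitz bound gives $\abs{(F(u)-F(v))(\psi)}\leq C\,\abs{\psi}_{W_{q,-\nu'}^{1}(\R;X')}\,\abs{u-v}_{L_{p,\nu'}(\R;X)}$. Since the $W_{q,-\nu'}^{1}$-norm is $\abs{\psi'}_{q,-\nu'}=\abs{\phi}_{q,-\nu'}$ and $\supp\phi\subseteq(-\infty,t-\delta]$, the first factor is $\leq e^{\nu'(t-\delta)}\abs{\phi}_{L_q}$; since $\supp(u-v)\subseteq[t,\infty)$, the second factor is $\leq e^{-\nu' t}\abs{u-v}_{L_p}$. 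The product is bounded by $Ce^{-\nu'\delta}\abs{\phi}_{L_q}\abs{u-v}_{L_p}\to0$ as $\nu'\to\infty$, forcing $(F(u)-F(v))(\psi)=0$; letting $\phi$ vary yields $\chi_{\R_{<t}}w=0$.

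To reach arbitrary $u,v\in L_{p,\nu}(\R;X)$ with $\chi_{\R_{<t}}(u-v)=0$, I would approximate while preserving the agreement on the past. Splitting off the common past $a:=\chi_{\R_{<t}}u=\chi_{\R_{<t}}v$ and approximating $a$ and the two futures $\chi_{\R_{\geq t}}u$, $\chi_{\R_{\geq t}}v$ separately by truncation and one-sided mollification, so that no support leaks across $t$, produces $u_n,v_n\in C_c^\infty(\R;X)$ with $u_n\to u$, $v_n\to v$ in $L_{p,\nu}(\R;X)$ and $\chi_{\R_{<t}}(u_n-v_n)=0$. Continuity of the Lipschitz map $\partial_\nu^{-1}F_\nu$ together with the boundedness of the multiplier $\chi_{\R_{<t}}$ on $L_{p,\nu}(\R;X)$ then carries the conclusion of the previous paragraph to the limit.

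I expect the genuine difficulty to be conceptual rather than computational: recognizing that causality need not be postulated but is already encoded in the uniform-in-$\nu$ Lipschitz estimate, and then exploiting the clash between the growth factor $e^{\nu'(t-\delta)}$ coming from a past-supported test function and the decay factor $e^{-\nu' t}$ coming from future-supported data to manufacture the gap $e^{-\nu'\delta}$. The only real technical care is needed in the support-preserving approximation, where ordinary mollification would smear mass across the cut at $t$; mollifiers supported on one side of the origin repair this.
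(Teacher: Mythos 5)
Your proposal is correct and takes essentially the same route as the paper: both arguments rest on the observation that the pairing $\left(F(u)-F(v)\right)(\psi)$ is independent of the weight, transfer $\partial_{\nu}^{-1}$ onto the test function through the distributional convolution $\psi=\1_{(-\infty,0]}\ast\phi$ (the paper's $-\partial_{-\nu}^{-1}\psi$ step), and then let the weight tend to infinity so that the clash $e^{\nu'(t-\delta)}e^{-\nu' t}=e^{-\nu'\delta}$ between the past-supported test function and the future-supported difference annihilates the uniform-in-$\nu$ Lipschitz bound. The only organizational difference is that the paper compares $F(v)$ with $F(\phi(m)v)$ for a smooth cutoff $\phi$ and reaches the sharp cutoff $\chi_{\R_{<t}}(m)v$ by continuity, whereas you compare two smooth arguments agreeing on the past and make the support-preserving (one-sided mollification) density step explicit, a step the paper leaves implicit.
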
 \begin{proof} The proof follows essentially along the
lines of \cite[Theorem 4.5]{Kalauch}. Since we are, however, dealing
with a Banach space setting here, the arguments are more delicate
and thus worth recalling in detail. Let $t\in\R$, $\nu_{1}$ such
that $\lip{F_{\nu}}<1$ for all $\nu\geqq\nu_{1}$. Let $\phi\in C^{\infty}(\R)$
be bounded. For $\nu\geqq\nu_{1}$ and $v\in C_{c}^{\infty}(\R;X)$
and $\psi\in C_{c}^{\infty,+}(\R;X')$ with $\sup\supp\psi\leqq t$
we compute 
\begin{align*}
\abs{\partial_{\nu_{1}}^{-1}F_{\nu_{1}}(v)(\psi)-\partial_{\nu_{1}}^{-1}F_{\nu_{1}}(\phi(m)v)(\psi)} & =\abs{\1_{[0,\infty)}\ast F(v)(\psi)-\1_{[0,\infty)}\ast F(\phi(m)v)(\psi)}\\
 & =\abs{\partial_{\nu}^{-1}F_{\nu}(v)(\psi)-\partial_{\nu}^{-1}F_{\nu}(\phi(m)v)(\psi)}\\
 & =\abs{F_{\nu}(v)(-\partial_{-\nu}^{-1}\psi)-F_{\nu}(\phi(m)v)(-\partial_{-\nu}^{-1}\psi)}\\
 & \leq\abs{-\partial_{-\nu}^{-1}\psi}_{W_{q,-\nu}^{1}(\R;X')}\abs{v-\phi(m)v}_{L_{p,\nu}(\R;X)}.\\
 & =\abs{\psi}_{L_{q,-\nu}(\R;X')}\abs{v-\phi(m)v}_{L_{p,\nu}(\R;X)}\\
 & \leq\abs{\psi}_{L_{q,0}(\R;X')}e^{\nu t}\abs{v-\phi(m)v}_{L_{p,\nu}(\R;X)}.
\end{align*}
 By continuity, we deduce that 
\begin{multline*}
\abs{\partial_{\nu_{1}}^{-1}F_{\nu_{1}}(v)(\psi)-\partial_{\nu_{1}}^{-1}F_{\nu_{1}}(\chi_{\R_{<t}}(m)v)(\psi)}\leqq\abs{\psi}_{L_{q,0}(\R;X')}e^{\nu t}\left(\int_{t}^{\infty}\abs{v(\tau)}^{p}e^{-p\nu\tau}\dd\tau\right)^{\frac{1}{p}}\\
=\abs{\psi}_{L_{q,0}(\R;X')}\left(\int_{0}^{\infty}\abs{v(\tau+t)}^{p}e^{-p\nu\tau}\dd\tau\right)^{\frac{1}{p}}.
\end{multline*}
 Hence, letting $\nu\to\infty$, we get the assertion. \end{proof}

\begin{thm}[Independence of $\nu$]Assume that $X$ is reflexive.
Let $F:C_{c}^{\infty}(\R;X)\to C_{c}^{\infty,+}(\R;X')'$ be $(0,-1)$-contracting.
Let $\nu_{1}\in\R_{>0}$ be such that $\lip{F_{\nu}}<1$ for all $\nu\geqq\nu_{1}$.
Let $\nu_{2}\geqq\nu_{1}$. Then, if $w_{\nu_{1}},w_{\nu_{2}}$ satisfy
\[
\partial_{\nu_{1}}w_{\nu_{1}}=F_{\nu_{1}}(w_{\nu_{1}})\text{ and }\partial_{\nu_{2}}w_{\nu_{2}}=F_{\nu_{2}}(w_{\nu_{2}}),
\]
we have $w_{\nu_{1}}=w_{\nu_{2}}$. \end{thm}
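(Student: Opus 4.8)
The plan is to realise both $w_{\nu_{1}}$ and $w_{\nu_{2}}$ as limits of one and the same Picard iteration and to exploit that this iteration is, step by step, genuinely independent of the chosen weight. Starting from $u_{0}:=0$ and applying Theorem \ref{thm:bst} with $C=\partial_{\nu_{i}}$ (legitimate since $\lip{F_{\nu_{i}}}<1$ for $i=1,2$), the iterates
\[
u_{n}^{(i)}:=\left(\partial_{\nu_{i}}^{-1}F_{\nu_{i}}\right)^{n}(0)\to w_{\nu_{i}}\quad(n\to\infty)
\]
converge in $L_{p,\nu_{i}}(\R;X)$. Hence it suffices to prove the key claim that $u_{n}^{(1)}$ and $u_{n}^{(2)}$ agree as functions for every $n$, and then to identify the two limits in a common ambient space.

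The key claim is established by induction on $n$, the base case $n=0$ being trivial. Assume $u_{n}^{(1)}=u_{n}^{(2)}=:u_{n}$ with $u_{n}\in L_{p,\nu_{1}}(\R;X)\cap L_{p,\nu_{2}}(\R;X)$. First I would show $F_{\nu_{1}}(u_{n})=F_{\nu_{2}}(u_{n})$: both are continuous extensions of the single map $F$ given on $C_{c}^{\infty}(\R;X)$, so I choose a sequence $(\phi_{j})$ in $C_{c}^{\infty}(\R;X)$ converging to $u_{n}$ \emph{simultaneously} in $L_{p,\nu_{1}}$ and $L_{p,\nu_{2}}$ — achievable by truncation followed by mollification, since $u_{n}$ lies in both spaces. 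Then $F(\phi_{j})\to F_{\nu_{1}}(u_{n})$ in $W_{p,\nu_{1}}^{-1}(\R;X)$ and $F(\phi_{j})\to F_{\nu_{2}}(u_{n})$ in $W_{p,\nu_{2}}^{-1}(\R;X)$; both weighted spaces embed continuously into $C_{c}^{\infty,+}(\R;X')'$ in the sense of the dual pairing, so the two limits coincide there and $F_{\nu_{1}}(u_{n})=F_{\nu_{2}}(u_{n})=:g_{n}$. Second, by the Remark preceding Theorem \ref{thm:causality} one has $\partial_{\nu_{i}}^{-1}g_{n}=\1_{[0,\infty)}\ast g_{n}$ for $i=1,2$, and the right-hand side does not depend on the weight; therefore $u_{n+1}^{(1)}=u_{n+1}^{(2)}$, and being represented simultaneously in $L_{p,\nu_{1}}$ and $L_{p,\nu_{2}}$ this common value lies in the intersection. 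This closes the induction.

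Finally, the sequence $u_{n}:=u_{n}^{(1)}=u_{n}^{(2)}$ converges to $w_{\nu_{1}}$ in $L_{p,\nu_{1}}$ and to $w_{\nu_{2}}$ in $L_{p,\nu_{2}}$. Since the weight $e^{-\nu px}$ is bounded below by a positive constant on each bounded interval, every weighted norm controls the $L_{1}$-norm there (by H\"older), so both convergences pass to $L_{1,\mathrm{loc}}(\R;X)$; uniqueness of limits there forces $w_{\nu_{1}}=w_{\nu_{2}}$, as desired.

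I expect the main obstacle to be the induction step, and within it the identification $F_{\nu_{1}}(u_{n})=F_{\nu_{2}}(u_{n})$: one must produce a \emph{single} approximating sequence of test functions that works for both weights at once, and then argue that the two extension-limits, which a priori live in different Banach spaces, really coincide as elements of $C_{c}^{\infty,+}(\R;X')'$. The convolution identity $\partial_{\nu_{i}}^{-1}=\1_{[0,\infty)}\ast$ makes the $\nu$-independence of the integration step transparent, so once the $F$-step is secured the remaining bookkeeping — membership of the iterates in the intersection and passage to the common limit in $L_{1,\mathrm{loc}}(\R;X)$ — is routine.
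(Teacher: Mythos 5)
Your proof is correct, but it follows a genuinely different route from the paper's. The paper proves $\nu$-independence \emph{via causality}: it fixes $t\in\R$, uses Theorem \ref{thm:causality} to write $\chi_{\R_{<t}}(m)w_{\nu}=\chi_{\R_{<t}}(m)\partial_{\nu}^{-1}F_{\nu}(\chi_{\R_{<t}}(m)w_{\nu})$, observes that the truncation $\chi_{\R_{<t}}(m)w_{\nu_{2}}$ lies in \emph{both} weighted spaces (because its support is bounded above and $\nu_{2}\geqq\nu_{1}$) and can therefore be approximated by a single sequence of test functions in both norms, and then closes the argument with the contraction estimate $\abs{\chi_{\R_{<t}}(m)(w_{\nu_{1}}-w_{\nu_{2}})}_{L_{p,\nu_{1}}}\leqq\lip{F_{\nu_{1}}}\abs{\chi_{\R_{<t}}(m)(w_{\nu_{1}}-w_{\nu_{2}})}_{L_{p,\nu_{1}}}$. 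You instead bypass causality altogether: your Picard iterates started at $0$ are automatically elements of $L_{p,\nu_{1}}\cap L_{p,\nu_{2}}$, so the simultaneous-approximation argument (the ingredient both proofs share) applies directly to them, and the Remark identifying $\partial_{\nu}^{-1}$ with $\1_{[0,\infty)}\ast$ on $C_{c}^{\infty,+}(\R;X')'$ makes each iteration step manifestly weight-independent; passing to $L_{1,\mathrm{loc}}$ then identifies the two limits. What each approach buys: yours is more self-contained and elementary, exploiting the constructive nature of the fixed point and needing neither Theorem \ref{thm:causality} nor any truncation; the paper's argument works directly with the solutions themselves (no reference to how they arise), exhibits causality as the structural reason why the solution cannot depend on the weight, and its truncation-plus-contraction pattern is exactly what is reused in the non-reflexive $C_{\nu}$-setting of Section \ref{sec:The-non-reflexive-case}. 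One point worth making explicit if you write this up: the identification $F_{\nu_{1}}(u_{n})=F_{\nu_{2}}(u_{n})$ rests on the embedding $C_{c}^{\infty,+}(\R;X')\subseteqq W_{q,-\nu}^{1}(\R;X')$ (so that $W_{p,\nu}^{-1}$-convergence implies convergence of the pairings, via Theorem \ref{thm:dualspace}) and on the density of $C_{c}^{\infty}$ in $W_{q,-\nu}^{1}$ (so that agreement of the two functionals on test functions forces agreement, hence equality a.e.\ of the locally integrable iterates); both facts hold and are implicitly used by the paper as well, but they carry the real weight of your induction step.
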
 

\begin{proof} The proof follows the ideas of the proof of \cite[Theorem 4.6]{Kalauch}:
Let $t\in\R$, $\nu\in\mathbb{R}_{\geqq\nu_{1}}$. Denoting by $w_{\nu}$
the solution of 
\[
\partial_{\nu}w_{\nu}=F_{\nu}(w_{\nu})\in W_{p,\nu}^{-1}(\R;X),
\]
we recall $w_{\nu}\in L_{p,\nu}(\R;X)$. Moreover, we have due to
Theorem \ref{thm:causality} 
\begin{eqnarray*}
\chi_{\R_{<t}}\left(m\right)w_{\nu} & = & \chi_{\R_{<t}}\left(m\right)\partial_{\nu}^{-1}F_{\nu}\left(w_{\nu}\right)\\
 & = & \chi_{\R_{<t}}\left(m\right)\partial_{\nu}^{-1}F_{\nu}\left(\chi_{\R_{<t}}\left(m\right)w_{\nu}\right).
\end{eqnarray*}
 Then, as $\partial_{\nu_{1}}^{-1}F_{\nu_{1}}$ coincides with $\partial_{\nu_{2}}^{-1}F_{\nu_{2}}$
on $C_{c}^{\infty}(\R;X)$ and as an approximating sequence of $C_{c}^{\infty}(\R;X)$-functions
for $\chi_{\R_{<t}}(m_{0})w_{\nu_{2}}$ can be chosen to converge
in both $L_{p,\nu_{1}}(\R;X)$ and $L_{p,\nu_{2}}(\R;X)$, we arrive
at 
\[
\chi_{\R_{<t}}(m)\partial_{\nu_{2}}^{-1}F_{\nu_{2}}(\chi_{\R_{<t}}(m)w_{\nu_{2}})=\chi_{\R_{<t}}(m)\partial_{\nu_{1}}^{-1}F_{\nu_{1}}(\chi_{\R_{<t}}(m)w_{\nu_{2}}).
\]
 Hence, 
\begin{align*}
 & \abs{\chi_{\R_{<t}}(m)(w_{\nu_{1}}-w_{\nu_{2}})}_{L_{p,\nu_{1}}(\R;X)}\\
 & =\abs{\chi_{\R_{<t}}(m)(\partial_{\nu_{1}}^{-1}F_{\nu_{1}}(\chi_{\R_{<t}}(m)w_{\nu_{1}})-\partial_{\nu_{1}}^{-1}F_{\nu_{1}}(\chi_{\R_{<t}}(m)w_{\nu_{2}}))}_{L_{p,\nu_{1}}(\R;X)}\\
 & \leqq\abs{\partial_{\nu_{1}}^{-1}F_{\nu_{1}}(\chi_{\R_{<t}}(m)w_{\nu_{1}})-\partial_{\nu_{1}}^{-1}F_{\nu_{1}}(\chi_{\R_{<t}}(m)w_{\nu_{2}})}_{L_{p,\nu_{1}}(\R;X)}\\
 & \leqq\lip{F_{\nu_{1}}}\abs{\chi_{\R_{<t}}(m)(w_{\nu_{1}}-w_{\nu_{2}})}_{L_{p,\nu_{1}}(\R;X)}.
\end{align*}
 Since $\lip{F_{\nu_{1}}}<1$ the assertion follows. \end{proof}

\subsection{Examples of admissible delay differential equations}

Before we illustrate the applicability of our abstract theorems, we
introduce the notion of having delay and of being amnesic for mappings
from function spaces into function spaces. \begin{defn} Let $E,F$
be vector spaces. A mapping $W:D(W)\subseteqq E^{\R}\to F^{\R}$ is
called \emph{amnesic} if for all $t\in\R$, $x,y\in D(W)$ we have
\[
\chi_{\R_{>t}}(m)(x-y)=0\Rightarrow\chi_{\R_{>t}}(m)(W(x)-W(y))=0
\]
 $W$ is said to \emph{have delay} if $W$ is not amnesic. \end{defn}
We shall give some examples of mappings having delay. \begin{example}[Discrete delay]
For $\theta\in\R$ we define $\tau_{\theta}:L_{p,\nu}(\R;X)\to L_{p,\nu}(\R;X),f\mapsto(t\mapsto f(t+\theta))$.
It is easy to see, that $\tau_{\theta}$ is not causal for $\theta>0$,
whereas it is amnesic. For $\theta<0$, $\tau_{\theta}$ is causal
and has delay. For convenience, we compute the operator norm of $\tau_{\theta}$.
For $f\in C_{c}^{\infty}(\R;X)$, we have 
\[
\abs{\tau_{\theta}f}_{p,\nu}^{p}=\int_{\R}\abs{f(t+\theta)}_{X}^{p}e^{-p\nu t}\dd t=\int_{\R}\abs{f(t+\theta)}_{X}^{p}e^{-p\nu(t+\theta)}\dd t\, e^{p\nu\theta}=\abs{f}_{p,\nu}^{p}e^{p\nu\theta}.
\]
 Thus, $\Abs{\tau_{\theta}}=e^{\nu\theta}$. \end{example}

\begin{example}[Continuous delay]\label{Continuous-Delay} The mapping
$\Theta:L_{p,\nu}(\R;X)\to L_{p,\nu}(\R;L_{p}(\R_{<0};X))$\\
 $\phi\mapsto\phi_{(\cdot)}:=(t\mapsto(\theta\mapsto\phi(t+\theta)))$
clearly has delay. We compute its operator norm. For $f\in C_{c}^{\infty}(\R;X)$,
we have 
\begin{align*}
\abs{\Theta f}_{p,\nu}^{p} & =\int_{\R}\int_{\R_{<0}}\abs{f(t+\theta)}_{X}^{p}\dd\theta e^{-p\nu t}\dd t\\
 & =\int_{\R_{<0}}\int_{\R}\abs{f(t+\theta)}_{X}^{p}e^{-p\nu(t+\theta)}\dd t\, e^{p\nu\theta}\dd\theta\\
 & =\int_{\R_{<0}}\abs{f}_{p,\nu}^{p}e^{p\nu\theta}\dd\theta\\
 & =\abs{f}_{p,\nu}^{p}\frac{1}{p\nu}.
\end{align*}
 Hence, $\Abs{\Theta}=\frac{1}{\sqrt[p]{p\nu}}$. \end{example}

To incorporate initial value problems, it is convenient to have an
adapted point trace result. 

\begin{thm}[Sobolev embedding]\label{thm:sobem} For $\nu\in\R\setminus\{0\}$,
define 
\begin{multline*}
C_{\nu}(\R;X):=\left\{ f:\R\to X;f\text{ continuous,}\phantom{e^{-\nu t}}\right.\\
\left.\abs{f}_{\nu,\infty}:=\sup\{\abs{e^{-\nu t}f(t)}_{X};t\in\R\}<\infty,\ e^{-\nu t}f(t)\to0(t\to\pm\infty)\right\} .
\end{multline*}
 We endow $C_{\nu}(\R;X)$ with the norm $\abs{\cdot}_{\nu,\infty}$
such that it becomes a Banach space. The mapping 
\[
\iota:C_{c}^{\infty}(\R;X)\subseteqq W_{p,\nu}^{1}(\R;X)\to C_{\nu}(\R;X),f\mapsto f
\]
 is continuous. \end{thm}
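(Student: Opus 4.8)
The plan is to exploit the explicit convolution representation of $\partial_{\nu}^{-1}$ from the previous theorem to convert the graph norm of $W_{p,\nu}^{1}$ into a pointwise bound by a single application of Hölder's inequality. First I would recall that, by definition of $\s X_{1}(\partial_{\nu})$, the norm on $W_{p,\nu}^{1}(\R;X)$ is $\abs{f}_{W_{p,\nu}^{1}}=\abs{\partial_{\nu}f}_{p,\nu}$, and that for $f\in C_{c}^{\infty}(\R;X)$ one has $\partial_{\nu}f=f'$. Since $f$ has compact support, the identity $\partial_{\nu}^{-1}=\1_{[0,\infty)}\ast$ for $\nu>0$ (equivalently, the fundamental theorem of calculus) yields the representation
\[
f(t)=\left(\1_{[0,\infty)}\ast f'\right)(t)=\int_{-\infty}^{t}f'(s)\,\dd s.
\]

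Next I would insert the weight and estimate pointwise. Writing $g:=f'=\partial_{\nu}f$ and multiplying by $e^{-\nu t}$, one has
\[
e^{-\nu t}f(t)=\int_{-\infty}^{t}e^{-\nu(t-s)}\,e^{-\nu s}g(s)\,\dd s.
\]
Because $t-s\geqq0$ on the domain of integration and $\nu>0$, the kernel satisfies $e^{-\nu(t-s)}\leqq1$, and Hölder's inequality with conjugate exponents $p,q$ gives
\[
\abs{e^{-\nu t}f(t)}_{X}\leqq\left(\int_{-\infty}^{t}e^{-q\nu(t-s)}\,\dd s\right)^{1/q}\left(\int_{\R}\abs{e^{-\nu s}g(s)}_{X}^{p}\,\dd s\right)^{1/p}.
\]
After the substitution $r=t-s$ the first factor equals $(q\nu)^{-1/q}$, while the second is exactly $\abs{\partial_{\nu}f}_{p,\nu}$. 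Taking the supremum over $t\in\R$ then produces $\abs{f}_{\nu,\infty}\leqq(q\nu)^{-1/q}\abs{f}_{W_{p,\nu}^{1}}$, which is the asserted continuity.

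Finally I would settle the two remaining points. For $\nu<0$ I would invoke the remark that $\partial_{\nu}^{-1}=-\1_{(-\infty,0]}\ast$, so that $f(t)=-\int_{t}^{\infty}f'(s)\,\dd s$; the identical computation, now with $t-s\leqq0$ and $\nu<0$ so that once more $e^{-\nu(t-s)}\leqq1$, yields the same estimate with $(q\abs{\nu})^{-1/q}$ in place of $(q\nu)^{-1/q}$. That $\iota(f)$ genuinely lands in $C_{\nu}(\R;X)$ is automatic: for $f\in C_{c}^{\infty}(\R;X)$ the function $e^{-\nu t}f(t)$ is continuous and vanishes outside the support of $f$, hence in particular tends to $0$ as $t\to\pm\infty$.

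I do not anticipate a serious obstacle: once the convolution representation is in hand, the estimate is essentially a one-line Hölder argument. The only place demanding care is the bookkeeping of the exponential weight inside the convolution together with the consistent treatment of both signs of $\nu$, where the direction of integration (past versus future) and the sign of the kernel flip simultaneously so as to keep the decaying factor $e^{-\nu(t-s)}\leqq1$ on the relevant range.
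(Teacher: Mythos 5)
Your proposal is correct and takes essentially the same route as the paper: both represent $f(t)$ via the fundamental theorem of calculus, split the weight as $e^{-\nu\xi}e^{\nu\xi}$ inside the integral, and apply H\"older's inequality to obtain $\abs{f}_{\nu,\infty}\leqq(q\nu)^{-1/q}\abs{f}_{W_{p,\nu}^{1}}$. The paper merely phrases the computation as a bound on $\abs{f(t)-f(s)}$ over the finite interval $[s,t]$ and then lets $s\to-\infty$, which is exactly your half-line estimate in the limit (and, like you, it dispatches $\nu<0$ by symmetry).
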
 \begin{proof} We shall only prove the
case $\nu>0$. The case $\nu<0$ can be dealt with similarly. Let
$f\in C_{c}^{\infty}(\R;X)$ and $s,t\in\R,s<t$. Then 
\begin{align*}
\abs{f(t)-f(s)} & \leqq\int_{s}^{t}\abs{\partial_{\nu}f(\xi)}\dd\xi\\
 & =\int_{s}^{t}\abs{\partial_{\nu}f(\xi)}e^{-\nu\xi}e^{\nu\xi}\dd\xi\\
 & \leqq\left(\int_{s}^{t}\abs{\partial_{\nu}f(\xi)}^{p}e^{-p\nu\xi}\dd\xi\right)^{\frac{1}{p}}\left(\int_{s}^{t}e^{q\nu\xi}\dd\xi\right)^{\frac{1}{q}}\\
 & \leqq\abs{f}_{1,\nu,p}\left(\frac{1}{q\nu}\left(e^{q\nu t}-e^{q\nu s}\right)\right)^{\frac{1}{q}}.
\end{align*}
 Letting $s\to-\infty$ in this inequality, we arrive at 
\[
e^{-\nu t}\abs{f(t)}\leqq\frac{1}{\sqrt[q]{q\nu}}\abs{f}_{1,\nu,p},
\]
 which gives the continuity result. \end{proof}

By the aforementioned theorem, $\iota$ can be uniquely continuously
extended to $W_{p,\nu}^{1}(\R;X)$. As the extension of $\iota$ is
the extension of the identity mapping, we omit $\iota$ in the following,
and choose, without giving explicit reference to it, the continuous
representer of a $W_{p,\nu}^{1}(\R;X)$-function. (It is easy to see
that the continuous extension is one-to-one.)

Now, we have all the tools at hand to apply our general solution theory
to a number of example cases.

\begin{example}[Initial value problems, cf. {\cite[Theorem 5.4]{Kalauch}}]
Let $\nu>0$, $u_{0}\in X$. Let $F$ be $(0,0)$-Lipschitz and such
that $F(\phi)=0$ for all $\phi\in C_{c}^{\infty}(\R;X)$ with $\supp\phi\subseteqq(-\infty,0]$.
Then the equation%
\footnote{By Theorem \ref{thm:sobem}, we have that the point evaluation at
$0$, denoted by $\delta$, is an element of $W_{p,\nu}^{-1}$. For
a Banach space element $u_{0}$ we write $\delta u_{0}$ for the derivative
of the map $t\mapsto\1_{[0,\infty)}(t)u_{0}$. Thus, in this sense
it holds $\delta u_{0}\in W_{p,\nu}^{-1}(\R;X)$.%
} 
\[
\partial_{\nu}u=F_{\nu}(u)+\delta u_{0}
\]
admits a unique solution $u\in L_{p,\nu}(\R;X)$ and such that $u-\chi_{\R_{>0}}(m)u\in W_{p,\nu}^{1}(\R;X)$
and $u(0+)=u_{0}$ if $\nu$ is chosen sufficiently large. 

Unique existence of $u$ follows from our general solution theory.
The remaining facts follow from the representation 
\[
u-\chi_{\R_{>0}}(m)u=u-\partial_{\nu}^{-1}\delta u_{0}=\partial_{\nu}^{-1}F(u)
\]
 and causality of $\partial_{\nu}^{-1}F_{\nu}$. \end{example}

\begin{example}[Finite discrete delay] Let $\theta_{1},\ldots,\theta_{n}\in\R_{\leqq0}$
be distinct, and let $\Phi:C_{c}^{\infty}(\R;X^{n})\to C_{c}^{\infty,+}(\R;X')'$
be $(0,-1)$-contracting. Then, for $\nu$ sufficiently large, the
equation 
\[
\partial_{\nu}u=\Phi_{\nu}(\tau_{\theta_{1}}u,\cdots,\tau_{\theta_{n}}u)
\]
 admits a unique solution $u\in L_{p,\nu}(\R;X)$.

It suffices to observe that the operator norm of 
\[
\Theta:L_{p,\nu}(\R;X)\to L_{p,\nu}(\R;X^{n}),f\mapsto(\tau_{\theta_{1}}f,\ldots,\tau_{\theta_{n}}f)
\]
 can be estimated arbitrarily close to $1$, if $\nu$ was chosen
sufficiently large. \end{example}

\begin{example}[Continuous delay] Let $\Phi:C_{c}^{\infty}(\R;L_{p}(\R_{<0};X))\to C_{c}^{\infty,+}(\R;X')'$
be $(0,-1)$-Lipschitz. Then, for $\nu$ sufficiently large, the equation
\[
\partial_{\nu}u=\Phi_{\nu}(u_{(\cdot)})
\]
 admits a unique solution, if $\nu$ is chosen sufficiently large.

The assertion follows from the Example \ref{Continuous-Delay}, where
we estimated the operator norm of the mapping $\phi\mapsto\phi_{(\cdot)}$
in the weighted spaces under consideration. \end{example}

\begin{example}[Neutral differential equations] Let $\Phi:C_{c}^{\infty}(\R;L_{p}(\R_{<0};X)^{2})\to C_{c}^{\infty,+}(\R;X')'$
be $(0,0)$-Lipschitz. Then the equation 
\[
\partial_{\nu}u=\Phi(u_{(\cdot)},(\partial_{\nu}u)_{(\cdot)})
\]
admits a unique solution $u\in W_{p,\nu}^{1}(\R;X)$, if $\nu$ was
chosen large enough. 

Consider the mapping 
\[
\Theta:W_{p,\nu}^{1}(\R;X)\to L_{p,\nu}(\R;L_{p}(\R_{<0};X)^{2}),f\mapsto(f_{(\cdot)},(\partial_{\nu}f)_{(\cdot)}).
\]
Note that the operator norm of $W_{p,\nu}^{1}(\R;X)\to L_{p,\nu}(\R;X)^{2},f\mapsto(f,\partial_{\nu}f)$
is bounded if $\nu\to\infty$ and that the mapping $L_{p,\nu}(\R;X)\ni f\mapsto f_{(\cdot)}\in L_{p,\nu}(\R;L_{p}(\R_{<0};X))$
has operator norm tending to $0$ if $\nu\to\infty$, by the aforementioned
example. We deduce that $\Theta$ is eventually $(1,0)$-contracting,
with arbitrarily small operator norm and that the map $\Phi\circ\Theta$
is eventually $(1,0)$-contracting. Hence, our general solution theory
applies. \end{example}

In the following, we will treat some more concrete examples form the
literature.

\begin{example} The following example has been considered in \cite{Corduneanu1973,Corduneanu1975,Havarneanu1981}
and the references therein. Let $B\in L_{1}(\R_{>0};\R^{n\times n})$,
$(A_{j})_{j}\in\ell_{1}(\mathbb{N};\R^{n\times n})$, $(t_{j})_{j}\in\R_{\geq0}^{\mathbb{N}}$
and let $f\in L_{p}(\R;\R^{n})$ be such that the support is bounded
from below. Consider the problem of finding $x\in L_{p,\nu}(\R;\R^{n\times n})$
such that 
\[
\partial_{\nu}x=\sum_{j=0}^{\infty}A_{j}\tau_{-t_{j}}x+B\ast x+f.
\]
 The unique existence of $x$ follows by observing that the operator
\begin{align*}
F & :L_{p,\nu}(\R;\R^{n})\to L_{p,\nu}(\R;\R^{n})\\
 & x\mapsto\left(\sum_{j=0}^{\infty}A_{j}\tau_{-t_{j}}x+B\ast x\right)
\end{align*}
is Lipschitz continuous. Indeed, Young's inequality ensures $\abs{B\ast x}\leq\abs{B}_{L^{1}}\abs{x}$
for all $x\in L_{p,\nu}(\R;\R^{n})$. The first term we estimate as
follows. Let $x\in L_{p,\nu}(\R;\R^{n})$. Then 
\[
\abs{\sum_{j=0}^{\infty}A_{j}\tau_{-t_{j}}x}\leq\sum_{j=0}^{\infty}\abs{A_{j}}\abs{x}=\abs{(A_{j})_{j}}_{\ell^{1}}\abs{x}
\]
 \end{example} 

In \cite{Das2006} the oscillations of possible solutions to the following
problem are discussed. 

\begin{example} Let $k\in\mathbb{N},n\in\mathbb{N}_{>0}$ and for
$j\in\{0,\ldots,k\}$ let $p_{j}:\R\to\R$ be continuous and bounded
and $\sigma_{j}\in\R_{>0}$. Let $f\in L_{p}(\R)$ with support bounded
from below and consider the following neutral differential equation
of $n$'th order 
\[
(x-p_{0}(m)\tau_{-\sigma_{0}}x)^{(n)}=\sum_{j=1}^{k}p_{j}(m)\tau_{-\sigma_{j}}x+f.
\]
 For $\nu\in\R_{>0}$, we may equally discuss 
\[
\partial_{\nu}^{n}(x-p_{0}(m)\tau_{-\sigma_{0}}x)=\sum_{j=1}^{k}p_{j}(m)\tau_{-\sigma_{j}}x+f.
\]
 The latter is the same as 
\[
x=\partial_{\nu}^{-n}\left(\sum_{j=1}^{k}p_{j}(m)\tau_{-\sigma_{j}}x\right)+\partial_{\nu}p_{0}(m)\tau_{-\sigma_{0}}x+\partial_{\nu}^{-n}f.
\]
We observe that this is a fixed point problem, which admits a unique
solution for $\nu$ large enough. Indeed, the operator norm of 
\[
\left(\partial_{\nu}^{-n}\left(\sum_{j=1}^{k}p_{j}(m)\tau_{-\sigma_{j}}\right)+p_{0}(m)\tau_{-\sigma_{0}}\right):L_{p,\nu}(\R)\to L_{p,\nu}(\R)
\]
 can be estimated by 
\begin{align*}
\Abs{\partial_{\nu}^{-n}\left(\sum_{j=1}^{k}p_{j}(m)\tau_{-\sigma_{j}}\right)+p_{0}(m)\tau_{-\sigma_{0}}} & \leq\Abs{\partial_{\nu}^{-n}\left(\sum_{j=1}^{k}p_{j}(m)\tau_{-\sigma_{j}}\right)}+\Abs{p_{0}(m)\tau_{-\sigma_{0}}}\\
 & \leq\frac{1}{\nu^{n}}\sum_{j=1}^{k}\abs{p_{j}}_{\infty}\exp(-\sigma_{j}\nu)+\abs{p_{0}}_{\infty}\exp(-\sigma_{0}\nu),
\end{align*}
which is eventually less than $1$ if $\nu$ is large enough. Note
that for having a solution theory, the continuity of the $p_{j}$'s
was not needed. \end{example} 

A different class of neutral differential equations, which was considered
in \cite{Frasson2005,Hale1970} is as follows. We shall treat the
Hilbert space case here for convenience.

\begin{example} Let $H$ be a Hilbert space and $M,L\in L(L_{2}(\R_{<0};H);H)$.
Consider the following equation 
\begin{equation}
(t\mapsto Mx_{t})'=(t\mapsto Lx_{t})+f,\label{eq:gen:neutr}
\end{equation}
where $f\in L_{2}(\R;H)$ with support bounded from below is given.
Our space-time approach prerequisites the consideration of the operator
\[
\Theta:C_{c}^{\infty}(\R;H)\to C_{c}^{\infty}(\R;L_{2}(\R_{<0};H)),\phi\mapsto\phi_{(\cdot)}
\]
in a slightly different version than before. We note that for $\phi\in C_{c}^{\infty}(\R;H)$
and $\nu\in\R_{>0}$, we have 
\begin{equation}
\partial_{\nu}\Theta\phi=\Theta\partial_{\nu}\phi.\label{eq:ThetaInter}
\end{equation}
For any $\nu\in\R_{>0}$ there is a continuous extension $\Theta_{\nu}$
as a mapping from $L_{2,\nu}(\R;H)$ to $L_{2,\nu}(\R;L_{2}(\R_{<0};H))$.
Moreover, for all $x\in L_{2,\nu}(\R;H)$ we have 
\[
\langle\Theta x,\Theta x\rangle=\frac{1}{2\nu}\langle x,x\rangle.
\]
 This equality yields the closedness of the range of $\Theta_{\nu}$.
Continuous extension of \eqref{eq:ThetaInter} for all $\phi\in W_{2,\nu}^{1}(\R;H)$
yields that $\partial_{\nu}$ leaves $R(\Theta_{\nu})$ invariant.
Furthermore, we have the same property for $\partial_{\nu}^{-1}$.
Hence, our perspective on \eqref{eq:gen:neutr} is the following.
Consider 
\[
\partial_{\nu}M(\Theta_{\nu}x)=L\Theta_{\nu}x+f.
\]
By the continuous invertibility of $\Theta$ and the intertwining
relation \eqref{eq:ThetaInter}, this is the same as to consider 
\[
\partial_{\nu}\Theta_{\nu}M(\Theta_{\nu}x)=\Theta_{\nu}L\Theta_{\nu}x+\Theta_{\nu}f.
\]
 Assume $\mathcal{M}_{\nu}:R(\Theta_{\nu})\to R(\Theta_{\nu}),y\mapsto\Theta_{\nu}My$
to be continuously invertible. Therefore, we formulate the equation
as follows 
\[
\partial_{\nu}\Theta_{\nu}M(y)=\Theta_{\nu}Ly+\Theta_{\nu}f.
\]
 for $y\in R(\Theta_{\nu})$. Now, our general solution theory applies
to the equation 
\[
\partial_{\nu}y=\Theta_{\nu}\left(L\mathcal{M}_{\nu}^{-1}y+f\right).
\]
 This yields a unique solution $y\in R(\Theta_{\nu})$. The solution
of equation \eqref{eq:gen:neutr} is then given by $x=\Theta_{\nu}^{-1}\s M_{\nu}^{-1}y$.
\end{example}

\section{The non-reflexive case -- spaces of continuous functions\label{sec:The-non-reflexive-case}}

In this section, we will describe how to adapt the general solution
theory of Section \ref{sec:gst} to the non-reflexive setting. The
main difficulty to overcome is to give appropriate meaning to ``eventually
$(k,\ell)$-Lipschitz continuous'' in order to state a coherent theory.
For the whole section, let $X$ be a Banach space. We focus here on
the $L_{\infty}$-norm, we could, however, also treat the case of
$L_{1}$-functions. As the case of $L_{1}$ is a hybrid of distributional
derivatives similar to the previous part and the issues resulting
from the non-reflexivity of the underlying space as discussed in the
following sections, we only consider the $L_{\infty}$-norm here.

\subsection{The time-derivative}

The distributional time-derivative as presented in Section \ref{sec:dde-lp}
cannot be used in the straightforward way by choosing $L_{\infty}$
as underlying space, since the (distributional) time-derivative would
not be densely defined anymore. Thus, we consider the more or less
classical way of discussing delay differential equations and use the
space of Banach space valued continuous functions $C_{\nu}(\R;X)$,
which we have already defined in Theorem \ref{thm:sobem}, as the
underlying space.

\begin{defn} For $\nu\in\R$, define $\partial_{\nu}:C_{\nu}^{1}(\R;X)\subseteqq C_{\nu}(\R;X)\to C_{\nu}(\R;X),f\mapsto f'$,
where $C_{\nu}^{1}(\R;X)\coloneqq\{f\in C_{\nu}(\R;X);f'\in C_{\nu}(\R;X)\}$.
\end{defn} 

\begin{prop}\label{prop:nu} Let $\nu\in\R\setminus\{0\}$. Then
$0\in\rho(\partial_{\nu})$, $\partial_{\nu}^{-1}f(t)=\int_{-\infty}^{t}f(\tau)\dd\tau$
($t\in\R$, $\nu>0$) and $\Abs{\partial_{\nu}^{-1}}=\frac{1}{\abs{\nu}}$.
\end{prop}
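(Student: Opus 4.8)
The plan is to establish the three claims in sequence, working on the core $\nu>0$ case and then indicating the parallel $\nu<0$ modification. First I would verify that the candidate inverse is well-defined. For $f\in C_{\nu}(\R;X)$ with $\nu>0$, the weight gives $\abs{f(\tau)}_{X}\leqq\abs{f}_{\nu,\infty}e^{\nu\tau}$, so $\int_{-\infty}^{t}\abs{f(\tau)}_{X}\dd\tau\leqq\abs{f}_{\nu,\infty}\int_{-\infty}^{t}e^{\nu\tau}\dd\tau=\frac{1}{\nu}\abs{f}_{\nu,\infty}e^{\nu t}$, which is finite and shows the integral $g(t):=\int_{-\infty}^{t}f(\tau)\dd\tau$ converges absolutely. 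The same estimate immediately yields $e^{-\nu t}\abs{g(t)}_{X}\leqq\frac{1}{\nu}\abs{f}_{\nu,\infty}$, and a short argument (splitting the integral and using that $e^{-\nu t}f(t)\to 0$) shows $e^{-\nu t}g(t)\to 0$ as $t\to\pm\infty$, so $g\in C_{\nu}(\R;X)$ and moreover $\Abs{\partial_{\nu}^{-1}}\leqq\frac{1}{\nu}$.

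Next I would check that $g$ indeed lies in $C_{\nu}^{1}(\R;X)$ and that the integration operator is a genuine two-sided inverse of $\partial_{\nu}$. Since $f$ is continuous, the fundamental theorem of calculus gives $g'=f$, and as $f\in C_{\nu}(\R;X)$ we get $g\in C_{\nu}^{1}(\R;X)$ with $\partial_{\nu}g=f$; this shows the integration operator is a right inverse. For the left inverse, given $h\in C_{\nu}^{1}(\R;X)$ I would apply the recovery identity $h(t)=\int_{-\infty}^{t}h'(\tau)\dd\tau$, which holds precisely because $e^{-\nu t}h(t)\to 0$ as $t\to-\infty$ kills the boundary term; hence $\partial_{\nu}^{-1}\partial_{\nu}h=h$. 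Together these establish that $\partial_{\nu}$ is bijective with the stated inverse, so $0\in\rho(\partial_{\nu})$.

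To pin down the operator norm exactly, I would exhibit near-extremal functions showing $\Abs{\partial_{\nu}^{-1}}\geqq\frac{1}{\nu}$. A natural choice is a smooth approximation (or truncation) of $t\mapsto e^{\nu t}x$ for a fixed unit vector $x\in X$: for such an input the integral produces essentially $\frac{1}{\nu}e^{\nu t}x$, so the weighted sup-norm of the output approaches $\frac{1}{\nu}$ while the input has weighted sup-norm near $1$. Combined with the upper bound this gives $\Abs{\partial_{\nu}^{-1}}=\frac{1}{\nu}=\frac{1}{\abs{\nu}}$. Finally, for $\nu<0$ the integration runs from $+\infty$, i.e. $\partial_{\nu}^{-1}f(t)=-\int_{t}^{\infty}f(\tau)\dd\tau$, and the decay at $+\infty$ plays the role previously played by decay at $-\infty$; the estimates are symmetric and yield $\Abs{\partial_{\nu}^{-1}}=\frac{1}{\abs{\nu}}$ as well.

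I expect the main obstacle to be verifying the limit conditions defining $C_{\nu}(\R;X)$ for the output $g$ — specifically that $e^{-\nu t}g(t)\to 0$ at both endpoints, since this is what guarantees $g$ actually lands in the space $C_{\nu}(\R;X)$ rather than merely being bounded, and it is also the property that makes the boundary term vanish in the left-inverse computation. The behavior at $-\infty$ follows cleanly from absolute convergence of the integral, but the behavior at $+\infty$ requires using the decay of $f$ itself, so I would treat that endpoint with a little more care.
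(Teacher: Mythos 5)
Your proposal is correct and follows essentially the same route as the paper: the identical weighted estimate yields the upper bound $\frac{1}{\nu}$, and the identical family of near-extremal inputs (smooth truncations of $t\mapsto e^{\nu t}x$ for a unit vector $x$) yields the matching lower bound, with the case $\nu<0$ handled symmetrically. You additionally spell out that the integration operator is a two-sided inverse mapping into $C_{\nu}(\R;X)$ (decay at $\pm\infty$, fundamental theorem of calculus, vanishing boundary term), which the paper's proof leaves implicit; this fills in detail rather than changing the method.
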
 \begin{proof} For $f\in C_{c}^{\infty}(\R;X)$ and $\nu>0$,
we compute 
\begin{align*}
\abs{e^{-\nu t}\int_{-\infty}^{t}f(\tau)\dd\tau} & =\abs{\int_{-\infty}^{t}e^{-\nu t+\nu\tau}f(\tau)e^{-\nu\tau}\dd\tau}\\
 & \leqq\int_{-\infty}^{t}e^{-\nu t+\nu\tau}\dd\tau\abs{f}_{C_{\nu}(\R;X)}=\frac{1}{\nu}\abs{f}_{C_{\nu}(\R;X)}.
\end{align*}
In order to see the remaining inequality, for $n\in\N$ take a function
$\phi_{n}\in C_{c}^{\infty}(\R)$ such that $0\leqq\phi_{n}\leqq1$
and $\phi_{n}=1$ on $[-n,n]$. Let $x\in X$ with $\abs{x}=1$ and
define $f_{n}(t)\coloneqq e^{\nu t}\phi_{n}(t)x$ for $n\in\N,$$t\in\R$.
Note that $\abs{f_{n}}_{C_{\nu}(\R;X)}\leqq1$ for all $n\in\N.$
Moreover, observe that for $n\in\N$ we have
\begin{align*}
\sup\{\abs{e^{-\nu t}(\partial_{\nu}^{-1}f_{n})(t)};t\in\R\} & \geqq e^{-\nu n}\int_{-n}^{n}e^{\nu\tau}\dd\tau\\
 & =\frac{1}{\nu}e^{-\nu n}\left(e^{\nu n}-e^{-\nu n}\right)\\
 & =\frac{1}{\nu}\left(1-e^{-2\nu n}\right)\to\frac{1}{\nu}\quad(n\to\infty).
\end{align*}
This yields $\Abs{\partial_{\nu}^{-1}}\geqq\frac{1}{\nu}$. The case
$\nu<0$ is similar. \end{proof}

Hence, $\partial_{\nu}$ is a possible choice for $C$ in the basic
solution theory. Before, we state the solution theory also in this
case, we define eventually Lipschitz continuous mappings to have a
prototype of right-hand sides at hand. We denote $C_{\nu}^{k}(\R;X):=\s X_{k}(\partial_{\nu})$
for $k\in\{1,0,-1\}$. Due to the non-reflexivity of $C_{\nu}(\R;X)$,
we cannot define eventual Lipschitz continuity for mappings with values
in a space of linear functionals. Instead of characterizing the negative
extra\-po\-la\-tion spaces as suitable duals, we introduce the
space $C_{-\infty}(\R;X):=\bigcup_{\nu\in\mathbb{R}_{>0}}C_{\nu}^{-1}(\R;X)$.
In order to compare elements of ``negative'' spaces for different
parameters $\nu$, we define the following equality relation between
these elements: For $\phi\in C_{\nu}^{-1}(\R;X)$ and $\psi\in C_{\mu}^{-1}(\R;X)$
we define 
\[
\phi=\psi:\Leftrightarrow\partial_{\nu}^{-1}\phi=\partial_{\mu}^{-1}\psi.
\]

\begin{rem} Let $\phi\in C_{\nu}^{-1}(\mathbb{R};X),\psi\in C_{\mu}^{-1}(\mathbb{R};X)$
with $\phi=\psi$. Then there exists a sequence $(\rho_{n})_{n\in\mathbb{N}}$
in $C_{c}^{\infty}(\mathbb{R};X)$ such that $\rho_{n}\to\phi$ in
$C_{\nu}^{-1}(\mathbb{R};X)$ and $\rho_{n}\to\psi$ in $C_{\mu}^{-1}(\R;X)$
as $n\to\infty$. Indeed, let $(\gamma_{n})_{n\in\mathbb{N}}\in C_{c}^{\infty}(\mathbb{R})^{\mathbb{N}}$
be a mollifier and define $\tilde{\rho}_{n}:=\gamma_{n}\ast\partial_{\nu}^{-1}\phi\in C_{\nu}(\R;X)\cap C_{\mu}(\R;X)$.
Then we obtain, due to the continuity of the translation operator
\[
[0,1]\ni s\mapsto(f\mapsto f(\cdot+s))\in L(C_{\nu}(\R;X))
\]
 for each $\nu\in\mathbb{R}_{>0}$, that $\tilde{\rho}_{n}\to\partial_{\nu}^{-1}\phi$
in $C_{\nu}(\R;X)$ and $C_{\mu}(\R;X)$ as $n\to\infty$. For $k\in\mathbb{N}$
let now $\chi_{k}\in C_{c}^{\infty}(\mathbb{R})$ such that $0\leq\chi_{k}\leq1$
and $\chi_{k}=1$ on $(-k,k)$. Then an easy computation shows $\chi_{k}\tilde{\rho}_{n}\to\tilde{\rho}_{n}$
in $C_{\nu}(\R;X)$ and $C_{\mu}(\R;X)$ as $k\to\infty$. Hence,
we find a strictly increasing sequence $(k_{n})_{n}$ of integers
such that $(\rho_{n})_{n}:=((\chi_{k_{n}}\tilde{\rho}_{n})')_{n}$
has the desired properties. \end{rem}

\begin{defn}[eventually $(k,\ell)$-contracting] Let $k,\ell\in\{1,0,-1\}$
and let $Y$ be a Banach space. A mapping $F:C_{c}^{\infty}(\R;X)\to C_{-\infty}(\R;Y)$
is called \emph{eventually $(k,\ell)$-Lipschitz continuous} if the
following assumptions are satisfied: \begin{itemize} 

\item there exists $\nu_{0}>0$ such that for all $\nu\geqq\nu_{0}$
we have $F(0)\in C_{\nu}^{-1}(\R;Y)$, 

\item there exists $\nu_{1}>0$ and $C>0$ such that for all $\nu\geqq\nu_{1}$,
$u,v\in C_{c}^{\infty}(\R;X)$ we have 
\[
\abs{F(u)-F(v)}_{C_{\nu}^{\ell}(\R;Y)}\leqq C\abs{u-v}_{C_{\nu}^{k}(\R;X)}.
\]
 \end{itemize} For an eventually $(k,\ell)$-Lipschitz continuous
mapping $F$, we denote by $F_{\nu}$ its Lipschitz continuous extension
from $C_{\nu}^{k}(\R;X)$ to $C_{\nu}^{\ell}(\R;Y)$. Moreover, denote
by $\lip{F_{\nu}}$ the infimum over all possible Lipschitz constants
for $F_{\nu}$. We call $F$ \emph{eventually $(k,\ell)$-contracting}
if $\limsup_{\nu\to\infty}\lip{F_{\nu}}<1$. \end{defn}

\begin{rem} Note that for a $(k,\ell)$-Lipschitz continuous mapping
we have $F[C_{c}^{\infty}(\mathbb{R};X)]\subseteqq\bigcap_{\nu\geqq\nu_{0}}C_{\nu}^{-1}(\R;X)$
for some $\nu_{0}>0$, where the intersection is understood with respect
to the equality relation defined above. \end{rem}

\begin{thm}[Solution theory]{\label{thm:sol_theory_cont}} Let
$F:C_{c}^{\infty}(\R;X)\to C_{-\infty}(\R;X)$ be $(0,-1)$-contracting.
Then there exists a unique solution $u\in C_{\nu}(\R;X)$ of the equation
\[
\partial_{\nu}u=F_{\nu}(u)
\]
 if $\nu$ is chosen sufficiently large. \end{thm}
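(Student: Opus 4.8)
The plan is to reduce this to the abstract fixed point theorem (Theorem \ref{thm:bst}) applied with $C=\partial_\nu$ acting on the scale of spaces $C_\nu^{k}(\R;X)=\s X_k(\partial_\nu)$. By Proposition \ref{prop:nu}, for $\nu\neq 0$ we have $0\in\rho(\partial_\nu)$, so the extrapolation scale $C_\nu^{1}\hookrightarrow C_\nu^{0}\hookrightarrow C_\nu^{-1}$ is available and $\partial_\nu$ extends to an isometric isomorphism from $C_\nu^{0}(\R;X)$ onto $C_\nu^{-1}(\R;X)$, exactly as in the abstract setup of Section \ref{sec:gst}. The hypothesis that $F$ is $(0,-1)$-contracting means precisely that, for all $\nu$ large enough, $F$ extends to a Lipschitz map $F_\nu:C_\nu^{0}(\R;X)\to C_\nu^{-1}(\R;X)$ with $\lip{F_\nu}<1$, since $\limsup_{\nu\to\infty}\lip{F_\nu}<1$. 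So the first step is simply to fix $\nu\geqq\nu_1$ large enough that $\lip{F_\nu}<1$.

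Next I would verify that $F_\nu$ fits the exact hypotheses of Theorem \ref{thm:bst}, namely that it is a genuine strict contraction from $\s X_0=C_\nu^{0}(\R;X)$ into $\s X_{-1}=C_\nu^{-1}(\R;X)$. The extension $F_\nu$ is defined on the dense subspace $C_c^\infty(\R;X)$ and, being Lipschitz with respect to the $C_\nu^{0}$-to-$C_\nu^{-1}$ norms, extends uniquely and continuously to all of $C_\nu^{0}(\R;X)$; the extension retains the same best Lipschitz constant $\lip{F_\nu}<1$. I would also use the first bullet of the definition of eventual $(0,-1)$-Lipschitz continuity, namely $F(0)\in C_\nu^{-1}(\R;X)$, to guarantee that $F_\nu$ maps into $C_\nu^{-1}(\R;X)$ (rather than merely into the union $C_{-\infty}(\R;X)$); the affine shift by the fixed element $F_\nu(0)$ does not affect the Lipschitz constant.

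With these points in place, Theorem \ref{thm:bst} applies verbatim with $C=\partial_\nu$ and the contraction $F_\nu$, yielding a unique fixed point $u\in C_\nu^{0}(\R;X)=C_\nu(\R;X)$ solving $\partial_\nu u=F_\nu(u)$. This gives both existence and uniqueness in one stroke, so the proof is essentially a one-line invocation of the abstract result once the hypotheses are matched.

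The only genuinely delicate point, and the step I would expect to be the main obstacle, is the bookkeeping around the space $C_{-\infty}(\R;X)=\bigcup_{\nu>0}C_\nu^{-1}(\R;X)$ and the equality relation $\phi=\psi:\Leftrightarrow\partial_\nu^{-1}\phi=\partial_\mu^{-1}\psi$ introduced for the non-reflexive setting. Because $F$ a priori takes values only in this union rather than in a single fixed extrapolation space, I must check that once $\nu$ is fixed the restriction of $F$ actually lands in $C_\nu^{-1}(\R;X)$ and defines an honest map there, so that $F_\nu$ is well defined independently of the identifications made across different weights. This is exactly what the eventual $(0,-1)$-Lipschitz estimate $\abs{F(u)-F(v)}_{C_\nu^{-1}}\leqq C\abs{u-v}_{C_\nu^{0}}$ together with $F(0)\in C_\nu^{-1}(\R;X)$ secures, and the preceding remark on simultaneous approximation in two weighted norms is what makes the equality relation compatible with the extension. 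Once this consistency is confirmed, the rest of the argument is the routine application of Theorem \ref{thm:bst} described above.
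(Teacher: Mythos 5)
Your proposal is correct and takes exactly the approach the paper intends: the paper's own proof of Theorem \ref{thm:sol_theory_cont} consists of the single word ``Clear.'', meaning precisely the invocation of Theorem \ref{thm:bst} with $C=\partial_{\nu}$ (justified by Proposition \ref{prop:nu}) that you spell out. Your extra care in checking that $F_{\nu}$ genuinely lands in the single space $C_{\nu}^{-1}(\R;X)$ rather than only in $C_{-\infty}(\R;X)$, and that the equality relation across weights is compatible with the extension, merely makes explicit what the paper leaves implicit.
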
 \begin{proof}
Clear. \end{proof}

\begin{rem} The latter theorem also extends to the case of $(1,0)$-contracting.
Moreover, similar to Section \ref{sec:dde-lp} and due to Proposition
\ref{prop:nu}, we also have a solution theory for $(0,0)$- or $(1,1)$-Lipschitz
continuous mappings, which is the common situation. \end{rem}

\begin{thm}[Causality] Let $F:C_{c}^{\infty}(\R;X)\to C_{-\infty}(\R;X)$
be $(0,-1)$-contracting. Then $\partial_{\nu}^{-1}F_{\nu}$ is causal
if $\nu$ is chosen sufficiently large. \end{thm}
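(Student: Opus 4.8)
The plan is to transcribe the structure of the proof of Theorem~\ref{thm:causality} into the continuous setting, exploiting the explicit antiderivative form of $\partial_{\nu}^{-1}$ from Proposition~\ref{prop:nu} in place of the duality arguments used in the reflexive case. Write $W_{\nu}:=\partial_{\nu}^{-1}F_{\nu}\colon C_{\nu}(\R;X)\to C_{\nu}(\R;X)$, fix $t\in\R$, and fix $\nu_{1}>0$ with $\lip{F_{\nu}}<1$ for all $\nu\geqq\nu_{1}$. The heart of the matter is the following pointwise statement for test functions: if $v,w\in C_{c}^{\infty}(\R;X)$ agree on $\R_{<t}$, then $(W_{\nu}v)(s)=(W_{\nu}w)(s)$ for every $s<t$.

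To prove this I would estimate, for fixed $\nu\geqq\nu_{1}$ and $s<t$,
\[
\abs{(W_{\nu}v)(s)-(W_{\nu}w)(s)}\leqq e^{\nu s}\abs{W_{\nu}v-W_{\nu}w}_{C_{\nu}(\R;X)}\leqq e^{\nu s}\lip{F_{\nu}}\abs{v-w}_{C_{\nu}(\R;X)},
\]
using that $\partial_{\nu}^{-1}\colon C_{\nu}^{-1}(\R;X)\to C_{\nu}(\R;X)$ is isometric and that $F_{\nu}$ is $(0,-1)$-Lipschitz. Since $v-w$ vanishes on $\R_{<t}$, its weighted sup norm satisfies $\abs{v-w}_{C_{\nu}(\R;X)}\leqq e^{-\nu t}\sup_{\sigma}\abs{(v-w)(\sigma)}$, so the right-hand side is bounded by $\lip{F_{\nu}}e^{\nu(s-t)}\sup_{\sigma}\abs{(v-w)(\sigma)}$, which tends to $0$ as $\nu\to\infty$ because $s-t<0$ and $\lip{F_{\nu}}<1$. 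The decisive point is that the value $(W_{\nu}v-W_{\nu}w)(s)$ is in fact independent of $\nu$: by the remark preceding the solution theory, $F$ maps $C_{c}^{\infty}(\R;X)$ into $\bigcap_{\nu\geqq\nu_{0}}C_{\nu}^{-1}(\R;X)$ with respect to the equality relation on $C_{-\infty}(\R;X)$, and $\partial_{\nu}^{-1}$ is defined exactly so as to produce the same continuous function for every admissible $\nu$. Letting $\nu\to\infty$ therefore forces $(W_{\nu}v)(s)=(W_{\nu}w)(s)$ for each $s<t$, at every fixed $\nu\geqq\nu_{0}$.

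It remains to pass from test functions to general $x,y\in C_{\nu}(\R;X)$ with $x=y$ on $\R_{<t}$; this is the step I expect to be the most delicate, since for a fixed $\nu$ one can no longer send $\nu\to\infty$ and must instead rely on continuity. Setting $z:=y-x$, which vanishes on $\R_{<t}$ (and, by continuity, at $t$), I would choose $v_{n}\to x$ in $C_{\nu}(\R;X)$ with $v_{n}\in C_{c}^{\infty}(\R;X)$, approximate $z$ by $\rho_{n}\in C_{c}^{\infty}(\R;X)$ with $\rho_{n}=0$ on $\R_{<t}$ and $\rho_{n}\to z$ in $C_{\nu}(\R;X)$ (using one-sided mollification and cut-off to keep the support in $\R_{\geqq t}$), and set $w_{n}:=v_{n}+\rho_{n}$. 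Then $v_{n},w_{n}\in C_{c}^{\infty}(\R;X)$ agree on $\R_{<t}$ and satisfy $v_{n}\to x$, $w_{n}\to y$ in $C_{\nu}(\R;X)$. The test-function case yields $(W_{\nu}v_{n})(s)=(W_{\nu}w_{n})(s)$ for all $s<t$, and since $W_{\nu}$ is Lipschitz on $C_{\nu}(\R;X)$ and $C_{\nu}$-convergence entails pointwise convergence, passing to the limit gives $(W_{\nu}x)(s)=(W_{\nu}y)(s)$ for all $s<t$, i.e.\ $\chi_{\R_{<t}}(m)(W_{\nu}x-W_{\nu}y)=0$. This is precisely causality in the sense of Definition~\ref{As-causality-is}. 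The only genuine obstacle is the coupled approximation guaranteeing that the smooth approximants coincide on $\R_{<t}$; everything else is a direct transcription of the reflexive argument with the antiderivative replacing the dual pairing.
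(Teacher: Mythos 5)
Your proof is correct, and it rests on exactly the two pillars that carry the paper's own argument: the $\nu$-independence of the continuous function $\partial_{\nu}^{-1}F_{\nu}(v)$ for test functions $v$ (via the equality relation on $C_{-\infty}(\R;X)$ and the remark that $F[C_{c}^{\infty}(\R;X)]\subseteqq\bigcap_{\nu\geqq\nu_{0}}C_{\nu}^{-1}(\R;X)$), and the observation that agreement of the inputs on $\R_{<t}$ produces the factor $e^{\nu(s-t)}$, which is killed by sending the weight $\nu\to\infty$. The differences lie in the packaging, and they cut both ways. You estimate pointwise, whereas the paper pairs against $\psi\in C_{c}^{\infty}(\R;X')$ with $\sup\supp\psi\leqq\tau-\eps$; since every quantity in sight is a continuous function, your pointwise version is a genuine simplification -- the dual pairing there is largely an artifact of transcribing the $L_{p}$-proof of Theorem \ref{thm:causality}. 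On the other hand, the paper does not compare two arbitrary test functions agreeing on the past: it compares $v$ with $\phi(m)v$ for a smooth cutoff $\phi$ vanishing on $[\tau,\infty)$ and equal to $1$ on $(-\infty,\tau-\eps]$. That choice makes the extension to general $x,y\in C_{\nu}(\R;X)$ -- which the paper compresses into ``this yields causality'' -- a one-line density-and-continuity argument: if $x=y$ on $\R_{<\tau}$, then $\phi(m)x=\phi(m)y$ outright (continuity gives $x(\tau)=y(\tau)$, and $\phi$ vanishes from $\tau$ onward), so $\partial_{\nu}^{-1}F_{\nu}(x)=\partial_{\nu}^{-1}F_{\nu}(\phi(m)x)=\partial_{\nu}^{-1}F_{\nu}(\phi(m)y)=\partial_{\nu}^{-1}F_{\nu}(y)$ on $(-\infty,\tau-\eps]$, with $\eps>0$ arbitrary. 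Because you compare arbitrary pairs instead, you are forced into the coupled approximation $w_{n}=v_{n}+\rho_{n}$ with $\rho_{n}$ supported in $\R_{\geqq t}$; that one-sided mollification step is sound (it is the same translation-continuity device the paper invokes in its remark on approximating sequences for the equality relation), but it is precisely the extra work the cutoff trick is designed to avoid. In short: your estimate is cleaner, the paper's density step is cleaner, and both routes are valid.
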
 \begin{proof}
Let $\nu_{0}\in\R_{>0}$ be such that $F$ admits a Lipschitz-continuous
extension for all $\nu\geqq\nu_{0}$. Let $\tau\in\R$ and let $\phi\in C^{\infty}(\R)$
be such that $0\leqq\phi\leqq1$, $\phi(s)=0$ for $s\geqq\tau$ and
$\phi(t)=1$ for $t\leqq\tau-\varepsilon$ for some $\varepsilon\in\R_{>0}$.
We show that $\partial_{\nu}^{-1}F_{\nu}(v)(t)=\partial_{\nu}^{-1}F_{\nu}(\phi(m)v)(t)$
for $v\in C_{c}^{\infty}(\R;X)$ and $t\leqq\tau-\eps$. Let $\psi\in C_{c}^{\infty}(\R;X')$
be such that $\sup\supp\psi\leqq\tau-\varepsilon$. We compute for
$v\in C_{c}^{\infty}(\R;X)$ and $\eta\geqq\nu$ 
\begin{align*}
 & \int_{\R}\abs{\langle\partial_{\nu}^{-1}F_{\nu}(v)-\partial_{\nu}^{-1}F_{\nu}(\phi(m)v),\psi\rangle}\\
 & \leqq\int_{\R}\abs{\partial_{\nu}^{-1}F_{\nu}(v)-\partial_{\nu}^{-1}F_{\nu}(\phi(m)v)}\abs{\psi}=\int_{\R}\abs{\partial_{\eta}^{-1}F_{\eta}(v)-\partial_{\eta}^{-1}F_{\eta}(\phi(m)v)}\abs{\psi}\\
 & \leqq\abs{\partial_{\eta}^{-1}F_{\eta}(v)-\partial_{\eta}^{-1}F_{\eta}(\phi(m)v)}_{\infty,\eta}\int_{-\infty}^{\tau-\varepsilon}\abs{\psi(t)}e^{\eta t}\dd t\\
 & \leqq\abs{(1-\phi(m))v}_{\infty,\eta}\int_{-\infty}^{\tau-\varepsilon}\abs{\psi(t)}e^{\eta t}\dd t\\
 & \leqq\sup\{\abs{e^{-\eta t}v(t)};\tau-\varepsilon\leqq t<\infty\}\int_{-\infty}^{\tau-\varepsilon}\abs{\psi(t)}e^{\eta t}\dd t\\
 & =\sup\{\abs{e^{-\eta(t+\tau-\varepsilon)}v(t+\tau-\varepsilon)};0\leqq t<\infty\}\int_{-\infty}^{0}\abs{\psi(t+\tau-\varepsilon)}e^{\eta(t+\tau-\varepsilon)}\dd t\\
 & =\sup\{\abs{e^{-\eta t}v(t+\tau-\varepsilon)};0\leqq t<\infty\}\int_{-\infty}^{0}\abs{\psi(t+\tau-\varepsilon)}e^{\eta t}\dd t\\
 & \to0\quad(\eta\to\infty),
\end{align*}
 where in the third line we have used the definition of equality of
elements in $C_{\nu}^{-1}(\R;X)$ and $C_{\eta}^{-1}(\R;X)$. Thus,
\[
\sup\{\abs{\partial_{\nu}^{-1}F_{\nu}(v)(t)-\partial_{\nu}^{-1}F_{\nu}(\phi(m)v)(t)e^{-\nu t}};-\infty\leqq t\leqq\tau-\varepsilon\}=0.
\]
 This yields causality. \end{proof}

\begin{thm}[Independence of $\nu$ ] Let $F:C_{c}^{\infty}(\R;X)\to C_{-\infty}(\R;X)$
be $(0,-1)$-contracting and $\nu_{0}>0$ such that $\abs{F_{\nu}}_{\mathrm{Lip}}<1$
for each $\nu\geqq\nu_{0}$. Let $\nu\geqq\mu\geqq\nu_{0}$ and let
$v_{\nu}\in C_{\nu}(\R;X),\, v_{\mu}\in C_{\mu}(\R;X)$ denote the
solutions of the equations 
\[
\partial_{\nu}v_{\nu}=F_{\nu}(v_{\nu})\mbox{ and }\partial_{\mu}v_{\mu}=F_{\mu}(v_{\mu}),
\]
 respectively. Then $v_{\nu}=v_{\mu}$. \end{thm}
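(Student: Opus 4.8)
The plan is to transport the argument of the reflexive ``Independence of $\nu$'' theorem to the present sup-norm setting, the decisive modification being that the sharp cut-offs $\chi_{\R_{<t}}(m)$ used there must be replaced by smooth ones: multiplication by $\chi_{\R_{<t}}(m)$ destroys continuity and hence does not preserve the spaces $C_{\nu}(\R;X)$, so it cannot be fed into $F_{\nu}$. After enlarging $\nu_{0}$ if necessary, we may assume that for every $\eta\geqq\nu_{0}$ the operator $\partial_{\eta}^{-1}F_{\eta}$ is causal (by the preceding Causality theorem) and that $\lip{F_{\eta}}<1$.

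First I would fix $\tau\in\R$ and $\eps\in\R_{>0}$ and choose $\phi\in C^{\infty}(\R)$ with $0\leqq\phi\leqq1$, $\phi(s)=1$ for $s\leqq\tau-\eps$ and $\phi(s)=0$ for $s\geqq\tau$. Since $\phi(m)v_{\nu}$ agrees with $v_{\nu}$ on $\R_{<\tau-\eps}$ and $\partial_{\nu}^{-1}F_{\nu}$ is causal, the fixed point identity $v_{\nu}=\partial_{\nu}^{-1}F_{\nu}(v_{\nu})$ yields $v_{\nu}=\partial_{\nu}^{-1}F_{\nu}(\phi(m)v_{\nu})$ on $\R_{\leqq\tau-\eps}$, and likewise $v_{\mu}=\partial_{\mu}^{-1}F_{\mu}(\phi(m)v_{\mu})$ there. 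The point of the smooth cut-off is that $\phi(m)v_{\nu}$ is continuous, is supported in $\R_{<\tau}$, and, writing $e^{-\mu s}v_{\nu}(s)=e^{(\nu-\mu)s}\bigl(e^{-\nu s}v_{\nu}(s)\bigr)$ and using $s<\tau$ together with $\nu\geqq\mu$, lies in $C_{\nu}(\R;X)\cap C_{\mu}(\R;X)$.

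The crux is then to show $\partial_{\nu}^{-1}F_{\nu}(\phi(m)v_{\nu})=\partial_{\mu}^{-1}F_{\mu}(\phi(m)v_{\nu})$ as continuous functions. On $C_{c}^{\infty}(\R;X)$ the two operators coincide: for $\rho\in C_{c}^{\infty}(\R;X)$ the element $F(\rho)$ lives in $C_{-\infty}(\R;X)$, so by the very definition of the equality relation on the negative spaces $\partial_{\nu}^{-1}F_{\nu}(\rho)=\partial_{\mu}^{-1}F_{\mu}(\rho)$. I would then invoke the approximation mechanism of the Remark preceding the solution theory: mollifying $\phi(m)v_{\nu}$ and multiplying by smooth cut-offs produces a sequence $(\rho_{n})$ in $C_{c}^{\infty}(\R;X)$ with $\rho_{n}\to\phi(m)v_{\nu}$ simultaneously in $C_{\nu}(\R;X)$ and in $C_{\mu}(\R;X)$. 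Passing to the limit via the continuity of $\partial_{\nu}^{-1}F_{\nu}$ on $C_{\nu}(\R;X)$ and of $\partial_{\mu}^{-1}F_{\mu}$ on $C_{\mu}(\R;X)$, and using that convergence in either weighted sup-norm forces pointwise convergence, the two limits agree, which gives the identity. Consequently $v_{\nu}=\partial_{\mu}^{-1}F_{\mu}(\phi(m)v_{\nu})$ and $v_{\mu}=\partial_{\mu}^{-1}F_{\mu}(\phi(m)v_{\mu})$ on $\R_{\leqq\tau-\eps}$.

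Finally I would run the contraction estimate entirely in the $C_{\mu}$-norm. Subtracting the two identities and using that $\partial_{\mu}^{-1}F_{\mu}$ is $\lip{F_{\mu}}$-Lipschitz on $C_{\mu}(\R;X)$, together with $0\leqq\phi\leqq1$ and $\supp\phi\subseteqq\R_{<\tau}$, gives
\[
\sup_{s\leqq\tau-\eps}\abs{e^{-\mu s}(v_{\nu}-v_{\mu})(s)}\leqq\lip{F_{\mu}}\sup_{s<\tau}\abs{e^{-\mu s}(v_{\nu}-v_{\mu})(s)}.
\]
The right-hand supremum is finite because $v_{\mu}\in C_{\mu}(\R;X)$ and, for $s<\tau$, $\abs{e^{-\mu s}v_{\nu}(s)}\leqq e^{(\nu-\mu)\tau}\abs{v_{\nu}}_{\nu,\infty}$. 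Letting $\eps\downarrow0$ upgrades the left-hand supremum to the one over $\R_{<\tau}$, so a single finite quantity is bounded by $\lip{F_{\mu}}<1$ times itself; hence it vanishes and $v_{\nu}=v_{\mu}$ on $\R_{<\tau}$. As $\tau\in\R$ was arbitrary, $v_{\nu}=v_{\mu}$. I expect the main obstacle to be the two-norm approximation step in the third paragraph: one must produce smooth compactly supported approximants of $\phi(m)v_{\nu}$ converging in $C_{\nu}(\R;X)$ and $C_{\mu}(\R;X)$ at once, which is precisely why the smooth cut-off $\phi$ is indispensable, as it keeps $\phi(m)v_{\nu}$ inside the intersection of the two continuous-function spaces where the mollification argument of the Remark is available.
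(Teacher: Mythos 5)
Your proposal is correct and follows essentially the same route as the paper's proof: the smooth cut-off $\phi$ in place of sharp truncation, causality of $\partial_{\nu}^{-1}F_{\nu}$, the identification $\partial_{\nu}^{-1}F_{\nu}(\phi(m)v_{\nu})=\partial_{\mu}^{-1}F_{\mu}(\phi(m)v_{\nu})$ via a single sequence of test functions converging in both $C_{\nu}(\R;X)$ and $C_{\mu}(\R;X)$, and the contraction estimate in the $\mu$-weighted sup-norm. If anything, you make explicit two points the paper leaves implicit, namely the finiteness of $\sup_{s<\tau}\abs{e^{-\mu s}(v_{\nu}-v_{\mu})(s)}$ and the limit $\eps\downarrow0$ needed to close the contraction argument.
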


\begin{proof} Let $t\in\mathbb{R}$ and let $\phi\in C^{\infty}(\R)$
be such that $0\leqq\phi\leqq1$ and $\phi(s)=0$ for $s\geqq t$
and $\phi(s)=1$ for $s\leqq t-\eps$ for some $\eps>0$. Note that
for $w\in C_{\nu}(\R;X)$ we have $\phi(m)w\in C_{\mu}(\R;X)$ with
$|\phi(m)w|_{\mu,\infty}\leq e^{(\nu-\mu)t}|\phi(m)w|_{\nu,\infty}$.
Hence, $\partial_{\nu}^{-1}F_{\nu}(\phi(m)v_{\nu})=\partial_{\mu}^{-1}F_{\mu}(\phi(m)v_{\nu})$,
since we can approximate $\phi(m)v_{\nu}$ by the same sequence of
test functions in both spaces $C_{\nu}(\R;X)$ and $C_{\mu}(\R;X)$.
Hence, we obtain by using the causality of $\partial_{\nu}^{-1}F_{\nu}$
\begin{align*}
|\chi_{\R_{<t-\eps}}(m)(v_{\nu}-v_{\mu})|_{\infty,\mu} & =|\chi_{\R_{<t-\eps}}(m)(\partial_{\nu}^{-1}F_{\nu}(\phi(m)v_{\nu})-\partial_{\mu}^{-1}F_{\mu}(\phi(m)v_{\mu}))|_{\infty,\mu}\\
 & =|\chi_{\R_{<t-\eps}}(m)(\partial_{\mu}^{-1}F_{\mu}(\phi(m)v_{\nu})-\partial_{\mu}^{-1}F_{\mu}(\phi(m)v_{\mu}))|_{\infty,\mu}\\
 & \leq\abs{F_{\mu}}_{\mathrm{Lip}}|\phi(m)v_{\nu}-\phi(m)v_{\mu}|_{\infty,\mu}.\\
 & \leq\abs{F_{\mu}}_{\mathrm{Lip}}|\chi_{\R_{<t}}(m)(v_{\nu}-v_{\mu})|_{\infty,\mu}.
\end{align*}
 Thus, we get $\chi_{\R_{<t}}(m)v_{\nu}=\chi_{\R_{<t}}(m)v_{\mu}$
for each $t\in\R$ and hence, $v_{\nu}=v_{\mu}$. \end{proof}

\subsection{Examples\label{sub:Examples}}

Let us describe the space $C_{-\infty}(\R;\R):=C_{-\infty}(\R)$ in
more detail. Let $\mu$ be a Borel measure on $\R$ such that for
some $\nu>0$ we have $t\mapsto\mu((-\infty,t])\in C_{\nu}(\R)$.
Then $\mu\in C_{\nu}^{-1}(\R)$. Indeed, let $(g_{n})_{n}$ be a $C_{c}^{\infty}(\R)$
sequence approximating $\mu((-\infty,\cdot])$ in $C_{\nu}(\R)$.
Then $g_{n}'$ converges to $\mu$ in $C_{\nu}^{-1}(\R)$. Moreover,
the derivative applied to $\mu((-\infty,\cdot])$ is just the distributional
derivative: Let $\phi\in C_{c}^{\infty}(\R)$ then we have for $n\in\N$
\[
\int_{\R}g_{n}'\phi=-\int_{\R}g_{n}\phi'\to-\int_{\R}\int_{-\infty}^{t}\dd\mu(s)\phi'(t)\dd t=-\int_{\R}\int_{s}^{\infty}\phi'(t)\dd t\,\dd\mu(s)=\int_{\R}\phi\,\dd\mu.
\]
 A particular instance of such measures are bounded measures with
support bounded below and a continuous cumulative distribution function.
As a non-trivial example we mention the derivative of the ``devil's
staircase''.

Another example is the derivative of the function $f:x\mapsto\1_{[0,\infty)}\cos(\frac{\pi}{x})x$.
Since $f$ is of unbounded variation, its derivative ($x\mapsto\cos(\frac{\pi}{x})+\frac{\sin(\frac{\pi}{x})}{x}\pi$)
is not a Borel measure.

\begin{example}[IVP for ODE]\label{IVP for Ode} Let $F:D(F)\subseteqq X^{\R}\to C_{-\infty}(\R;X)$
with $F(\phi)=0$ for each $\phi\in C_{c}^{\infty}(\R;X)(\subseteqq D(F))$
with $\supp\phi\subseteqq(-\infty,0)$. We assume that for every $x\in X$
the mapping 
\begin{align*}
G_{x}:C_{c}^{\infty}(\R;X) & \to C_{-\infty}(\R;X)\\
\phi & \mapsto F(\phi+\1_{[0,\infty)}x)
\end{align*}
is $(0,-1)$-contracting. Then, by Theorem \ref{thm:sol_theory_cont},
we find for every $u^{(0)}\in X$ a unique solution $v\in C_{\nu}(\R;X)$
of the equation 
\begin{align*}
\partial_{\nu}v=G_{u^{(0)}}(v).
\end{align*}
Moreover, due to causality, $v$ is supported on $[0,\infty)$. We
define $u:=v+\1_{[0,\infty)}u^{(0)}\in C_{\nu}^{-1}(\R;X)$, which
solves the equation $\dot{u}=F(u)$ on the half axis $\mathbb{R}_{>0}$.
Furthermore, $v=u-\1_{[0,\infty)}u^{(0)}$ is continuous and thus,
$0=v(0-)=v(0+)=u(0+)-u^{(0)}$, which gives that $u$ satisfies the
initial condition $u(0+)=u^{(0)}$. \end{example}

\begin{rem}[Nemitzkii-operator] The mapping $F:\phi\mapsto(t\mapsto f(\phi(t)))$
for some Lipschitz continuous $f:X\to X$ satisfies the assumptions
from the previous example. \end{rem}

\begin{example} Let $H$ be a Hilbert space. Let $T,K\in L(H)$.
Consider the initial value problem: 
\[
\begin{cases}
\dot{S}(t)=TS(t)-S(t)T & ,t>0,\\
S(0)=K.
\end{cases}
\]
 The latter equation has a unique continuous solution $S:\R_{\geqq0}\to L(H)$.
Indeed, consider the operator 
\[
[\cdot,T]:L(H)\to L(H),S\mapsto TS-ST.
\]
 The latter is a continuous mapping with norm bounded by $2\Abs{T}$.
Considering $G_{K}$ as above with $F(\phi):=(t\mapsto[\phi(t),T])$
for functions $\phi:\R\to L(H)$, we are in the situation of Example
\ref{IVP for Ode}. \end{example}

Similarly to Section \ref{sec:dde-lp}, we are now in the position
to discuss several delay type equations. For sake of brevity we shall
only list the operators involved and compute their operator norms.

\begin{example} The operator $\tau_{\theta}:C_{\nu}(\R;X)\to C_{\nu}(\R;X),f\mapsto f(\cdot+\theta)$
has operator norm $e^{\nu\theta}$, which can be read off from the
following. From 
\[
e^{-\nu t}f(t+\theta)=e^{\nu\theta}e^{-\nu(t+\theta)}f(t+\theta)
\]
for $t\in\R$ and $f\in C_{c}^{\infty}(\R;X)$, we see that $\Abs{\tau_{\theta}}=e^{\nu\theta}$.
\end{example}

\begin{example} The operator $C_{\nu}(\R;X)\ni\phi\mapsto\phi_{(\cdot)}\in C_{\nu}(\R;C_{b}(\R_{<0};X))$
has norm bounded by $1$. Indeed, for $t\in\R$ and $\phi\in C_{c}^{\infty}(\R;X)$
we compute 
\[
\abs{\phi_{(t)}}_{C_{b}(\R_{<0};X)}=\sup_{\theta\in\R_{<0}}\abs{\phi(t+\theta)}=\sup_{\theta\in\R_{<0}}\abs{\phi(t+\theta)e^{-\nu(t+\theta)}}e^{\nu(t+\theta)}\leqq\abs{\phi}_{\nu,\infty}e^{\nu t}.
\]
 \end{example}

\begin{rem} Note that $\phi\mapsto\phi_{(\cdot)}$ is not a strict
contraction for $\nu$ large as it has been in the $L_{p}$-case.
Hence, in order to solve equations of the form 
\[
\partial_{\nu}u=F_{\nu}(u_{(\cdot)})
\]
$F_{\nu}$ needs to be $(0,-1)$-contracting. So, Lipschitz-continuity
does not suffice to establish a well-posedness theorem, at least in
the continuous case. Moreover, note that this perspective also effects
neutral differential equations of the form $\partial_{\nu}u=F_{\nu}(u_{(\cdot)},(\partial_{\nu}u)_{(\cdot)})$
for suitable $F$. In that case one has to assume that $F$ is $(0,0)$-contracting
and not only $(0,0)$-Lipschitz. \end{rem}

\end{document}